\theoremstyle{plain}
\newtheorem{theorem}{\noindent Theorem}
\newtheorem{lemma}{\noindent Lemma}
\newtheorem{definition}{\noindent Definition}
\newtheorem{conjecture}{\noindent Conjecture}
\newtheorem{statement}{\noindent Proposition}
\theoremstyle{definition} 
\newtheorem{Rem}{Remark}
\newcommand{\Aut}{\operatorname{Aut}}
\newcommand{\meas}{\operatorname{meas}}
\newcommand{\Meas}{\operatorname{Meas}}
\newcommand{\dist}{\operatorname{dist}}
\newcommand{\ov}{\overline}
\newcommand{\wh}{\widehat}
\begin{document}
\author{A. M. Vershik\footnote{St.Petersburg Department of Steklov Mathematical
Institute, St.Petersburg, Russia, email: vershik\@pdmi.ras.ru.}}

\title{On classification of measurable functions of several variables}
\date{October 10, 2012}


\maketitle
\begin{abstract}
We define a normal form (called the canonical image) of an arbitrary
measurable function of several variables with respect to a natural
group of transformations; describe a new complete system of 
invariants of such a function (the system of joint distributions); and relate these
notions to the matrix distribution, another invariant of measurable
functions found earlier, which is a random matrix.
Bibliography: $7$ titles.
\end{abstract}

\section{Introduction}

In the first section of the paper, we set the problem and
introduce basic notions. The main part is the second
section, where we introduce the system of joint distributions and
prove the main lemma on the bases generated by a pure function. Using
these notions and results, we define a normal form of a measurable function of two (or
more) variables and introduce the so-called canonical image of a
function. This is an interesting, if not completely explicit,
model of functions, which is partly similar to a random process with a
set of measures as the parameter set. This notion,
which is apparently new, is related to a number of
interesting questions, which presumably provide a new method of
studying measurable functions of many variables. The third (sketchily written) section
links the previous analysis to the notion of matrix
distribution (which is a measure on matrices, or a random matrix) -- another
invariant of functions, which can be called randomization.
More exactly, this is a method of {\it classification of functions
via randomization}; it generalizes Gromov's idea, developed by the
author, on the classification of metrics on measure spaces. The
construction of a measurable function with a given matrix invariant,
announced in \cite{V1}, is based on the ergodic method, systems of
joint distributions, and the canonical image considered in Sec.~2.
The author hopes to develop these ideas in another publication.

\subsection{Setting of the problem}

Recall that the classification of measurable functions of one variable
defined on a Lebesgue space $(X,\mu)$ with a continuous measure $\mu$
and taking
values in a standard Borel space $A$ with respect to the group of all
measure-preserving transformations of the argument
is almost obvious. Namely, a measurable,  a.\,e.\ one-to-one function
$f:X\rightarrow A$ is equivalent to the identity function on the space
$A$ endowed with the measure $f_*\mu$, where 
$f_*\mu$ is the distribution of $f$ (i.e., the image of the measure
$\mu$ under $f$). This is exactly the normal (``tautological'') form
of an arbitrary measurable one-to-one function of one variable. It is
not difficult to write a normal form of an arbitrary (not necessarily
one-to-one) function. This is done in  \cite{R} (see also \cite{V1}) 
using the theory of conditional measures. To find this normal form,
one should consider the partition of the space $(X,\mu)$ into the
level sets of the function $f$ and regard  $(X,\mu)$ as a fiber bundle
over $A$. Assigning to $f_*\mu$-a.e.\ value $a \in A$ of $f$ the type 
of the conditional measure on the corresponding level set 
$f^{-1}(a)$, we turn $(X,\mu)$ into a fiber bundle defined only in
terms of $A$ and the family of these conditional measures; denote it by 
$\ov A$. Then the normal form of a general
measurable function of one variable is the projection
$F:({\ov A},{\ov \mu})\to(A,f_*\mu)$.

In this paper, we study a similar classification problem for
functions of several variables, which is far more difficult. It links
up with the paper  \cite{V1}, but here we sketch
a new method of analyzing measurable functions and relate it to the
existing approach. The method consists in considering the system of
joint distributions of sections of a function of several variables as
an invariant with respect to a natural equivalence.

Consider the product $(X\times Y,\mu\times \nu)$ of two spaces with continuous measures
and a measurable function $f$ on this
product taking values in a standard Borel space $A$.
We will always consider continuous measures
$\mu,\nu$, and the value space $A$ will always be an arbitrary standard
Borel space (finite, countable, or continual; the latter case is the
most interesting). We may assume without loss of generality that
$A$ is the interval
$[0,1]$; we will use only the Borel structure on this space.

\begin{definition} Two functions $f$ and $f'$ defined on the spaces
$(X\times Y,\mu\times \nu)$ and $(X'\times Y',\mu'\times \nu')$,
respectively, and taking values in the standard Borel space $A$ will be
called equivalent if there exist measure-preserving $(T\mu=\mu'$,
$S\nu=\nu')$
invertible transformations
$T:X\rightarrow X'$, $S:Y\rightarrow Y'$ 
such that
 $$
f'(Tx,Sy)=f(x,y),
$$
or, equivalently,
 $$ 
f'(x',y')=f(T^{-1}x',S^{-1}y').
$$
 \end{definition}

There are various special cases and generalizations of this notion,
which can be studied by the same method. In the definition above, the
group with respect to which we classify functions is the
product of the groups of measure-preserving automorphisms
of each component:
 $$\Aut(X, \mu)\times \Aut(Y, \nu).
 $$  
We mention two more possibilities.

\begin{definition}
With the above notation, assume that the factors coincide as measure
spaces: $X=Y$ and $X'=Y'$. In this case, we can consider the equivalence of
  functions with respect to automorphisms of the form $T\times T$; 
this classification will be called the {\it diagonal classification of
measurable functions}: functions $f$ and $f'$ are equivalent in this sense
if there exists an automorphism $T$ of $(X,\mu)$ such that  
  $$
  f'(Tx,Ty)=f(x,y), 
$$
or, equivalently,
  $$ f'(x',y')=f(T^{-1}x',T^{-1}y').$$
In other words, here we consider the diagonal action of the group
$\Aut(X, \mu)$.
\end{definition}

The diagonal classification is natural, for example, in the case of
functions symmetric with respect to permutations of arguments. In
particular, this setting incorporates the important problem of classification of
metrics as measurable functions on a metric measure space; this
problem is studied in detail and solved in
\cite{gr,V2}.

The second classification problem for functions of two variables is
related to the group of skew products, or the ``skew equivalence.'' It
is as follows.

\begin{definition}
For simplicity, assume that two functions
$f(\cdot,\cdot)$ 
and $f'(\cdot,\cdot)$ are defined on the same space\linebreak
$(X\times Y,\mu\times \nu)$. We say that they are skew-equivalent with
respect to the first variable if there exists a  ``skew product''
automorphism $G$ of the space $(X\times Y,\mu\times \nu)$, i.e., an automorphism of the form
 $G(x,y)=(Tx,S_x y)$
where $T:X\rightarrow X$ is an automorphism of the space
$(X,\mu)$ and $\{S_x\}_{x\in X}$ is a family of automorphisms of the space
$(Y,\nu)$, that is measurable with respect to $x\in X$ and
satisfies
 $$
f'(Tx,S_xy)=f(x,y),
$$
or, equivalently,
 $$ 
f'(x',y')=f(T^{-1}x',S_{x}^{-1}y'),\quad x=T^{-1}x'.
$$
\end{definition}

One can easily see that the skew equivalence problem can be reduced to
the first problem with one variable less and
the value space $A$ replaced with the space of probability
measures on $A$ (i.e., with the space
$\meas(A)$, or, more exactly,
$\Meas(A)$; see below). Hence it does not involve any essentially
new effects.

In a similar way we may state classification problems for
functions of more variables; we can also impose additional
restrictions (symmetries, inequalities,
etc.)\ on the values of functions, and, which is especially important, vary the groups of
automorphisms with respect to which the equivalence is being considered. The
difficulty of a classification problem depends substantially on the difficulty
of the corresponding group as a subgroup of the group of all
automorphisms. In this paper, we mainly consider the first equivalence
with respect to the direct product of the groups of automorphisms
of the variables.

In \cite{V1}, the direct and diagonal equivalence problems were solved
in terms of so-called matrix distributions, i.e.,
$S_{\infty}$-invariant measures on the space of distance matrices.
Below we give a new proof of this result, which
follows from the main theorem of this paper on the system of joint
distributions as a complete invariant.

\subsection{Basic notions}

\begin{definition}
A function $f(\cdot,\cdot)$ of two variables defined on a space
$(X\times Y,\mu\times \nu)$ is called {\it pure} if the partition 
$\xi_1$ (respectively, $\xi_2$) of the space
$(X,\mu)$ (respectively, $(Y,\nu)$) into classes of points
 $x \in X$ (respectively, $y \in Y)$ for which $f(x,\cdot)=f(x',\cdot)$ 
$\nu$-${\rm mod}~0$
 (respectively, $f(\cdot,y)=f(\cdot,y')$ $\mu$-${\rm mod}~0$) is the
 partition into points ${\rm mod}~0$.
 \end{definition}

The purety of a function of two variables is a counterpart of the
one-to-oneness of a function of one variable. The general case of
the classification problem can easily be reduced to the case of a pure function by
considering the quotient space of
$(X\times Y,\mu\times \nu)$ obtained from the partition
$\xi_1\vee \xi_2$ (see \cite {V1}). The group-theoretic meaning of the
notion of a pure function is as follows.

 \begin{statement}
A measurable function of two variables is pure if and only if its
stabilizer with respect to the action of the groups
$\Aut_Q(X,\mu)\times{\rm Id}$ and ${\rm Id}\times \Aut_Q(Y,\nu)$ is 
trivial (i.e., consists of the group identity); here $\Aut_Q(X,\mu)$
(respectively, $\Aut_Q(Y,\nu)$) is the group of transformations of the 
space $(X,\mu)$ (respectively, $(Y,\nu)$) with a quasi-invariant measure, 
which acts in a natural way (by substitutions of variables) in the space 
of measurable functions.
 \end{statement}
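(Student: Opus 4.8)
The plan is to reduce the whole statement to a clean fact about the single partition $\xi_1$ (the argument for $\xi_2$ being symmetric). First I observe that an automorphism $T\in\Aut_Q(X,\mu)$ produces a stabilizing element $(T,\mathrm{Id})$ exactly when $f(T^{-1}x,y)=f(x,y)$ for $(\mu\times\nu)$-a.e.\ $(x,y)$; by Fubini this is equivalent to $f(T^{-1}x,\cdot)=f(x,\cdot)$ $\nu$-$\mathrm{mod}\,0$ for $\mu$-a.e.\ $x$, i.e.\ to the condition that $x$ and $T^{-1}x$ lie in the same class of $\xi_1$ for a.e.\ $x$. Thus $(T,\mathrm{Id})$ stabilizes $f$ iff $T$ lies in the subgroup $\Aut_Q(\xi_1)\subset\Aut_Q(X,\mu)$ of those quasi-invariant automorphisms that preserve every class of $\xi_1$. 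The proposition therefore becomes: $\Aut_Q(\xi_1)$ is trivial if and only if $\xi_1$ is the partition into points $\mathrm{mod}\,0$.

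The forward implication is immediate. If $f$ is pure then $\xi_1$ is the point partition, so almost every class is a singleton; any $T$ preserving all classes then satisfies $T^{-1}x=x$ for a.e.\ $x$, whence $T=\mathrm{Id}$, and likewise for the second variable. This gives triviality of the stabilizer.

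For the converse I argue by contraposition: assuming $\xi_1$ is not the point partition, I construct a nontrivial element of $\Aut_Q(\xi_1)$. I disintegrate $\mu=\int_Z\mu_z\,d\lambda(z)$ over the quotient $Z=X/\xi_1$; non-purity means that on a set $Z_0\subset Z$ with $\lambda(Z_0)>0$ the conditional measure $\mu_z$ is not a Dirac mass, i.e.\ is supported on more than one point. Over $Z_0$ I build a measurable family $z\mapsto T_z$ of nontrivial quasi-invariant automorphisms of the fibers: where $\mu_z$ carries a nonatomic part I use a fixed measure-preserving rotation of that part, and where $\mu_z$ is purely atomic with at least two atoms I transpose two of them. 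The decisive point is that a transposition of two atoms of \emph{unequal} mass is quasi-invariant but not measure-preserving; this is precisely why the statement is formulated with $\Aut_Q$ rather than $\Aut$, since for a two-atom fiber of unequal weights the only measure-preserving fiber symmetry is the identity. Setting $T=\mathrm{id}$ off $Z_0$ yields a global $T$ preserving every class of $\xi_1$ and moving a set of positive $\mu$-measure, hence a nontrivial stabilizing element.

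The main obstacle is the measurable assembly of these fiberwise maps into one globally measurable, genuinely quasi-invariant automorphism of $(X,\mu)$: one must choose the two atoms (or the nonatomic block to be rotated) measurably in $z$, and verify that the resulting Radon--Nikodym density $d(T_*\mu)/d\mu$ is a well-defined positive measurable function so that $T\in\Aut_Q(X,\mu)$. Here I would lean on Rokhlin's theory of measurable partitions together with a standard measurable-selection argument to make the choice of atoms depend measurably on $z$. The conceptual content, however, is entirely the single remark above---that every non-point conditional measure admits a nontrivial quasi-invariant symmetry---so once measurability is secured the proof closes.
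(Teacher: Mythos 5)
The paper states this proposition without proof, so there is no argument of the author's to compare yours against; judged on its own terms, your proof is correct. The Fubini reduction of the stabilizer condition to the fiberwise condition (that $T$ fixes $\mu$-a.e.\ class of $\xi_1$ setwise) is exactly right, the forward direction is indeed immediate, and the contrapositive construction --- disintegrate $\mu$ over the quotient $X/\xi_1$ and assemble a measurable family of nontrivial fiber automorphisms, a rotation of the continuous part where it is present and a transposition of two atoms otherwise --- is the natural way to produce a nontrivial stabilizing element. The measurability details you defer are genuinely routine: $\xi_1$ is a measurable partition in Rokhlin's sense because it is the preimage partition of the measurable map $x \mapsto f(x,\cdot)$ into the Polish space of ($\nu$-classes of) $A$-valued measurable functions on $Y$, so the disintegration exists and the choice of atoms or of the continuous block can be made measurably in the quotient variable. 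The most valuable point in your write-up is the observation about atoms of unequal mass: it shows the proposition would be \emph{false} with $\Aut(X,\mu)$ in place of $\Aut_Q(X,\mu)$ (a non-pure function whose $\xi_1$-classes are pairs of atoms of masses, say, $1/3$ and $2/3$ has trivial measure-preserving stabilizer, since a measure-preserving map fixing each class must preserve conditional measures), and it thereby explains why the statement is formulated with quasi-invariant transformations --- a point the paper itself leaves tacit.
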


A stronger condition (which is more difficult to formulate in terms of
sections) is as follows.

   \begin{definition}
A function is called totally pure if its stabilizer with respect to
the action of the group
$\Aut_Q(X,\mu)\times \Aut_Q(Y,\nu)$ is trivial.
     \end{definition}

As we will see, the total purety of a function means that it has no
symmetries of the form of direct products of substitutions of
variables. Using this definition as a model, one can formulate similar purety conditions
for actions of other groups.

For the diagonal classification, the purety of $f$ means that its
stabilizer with respect to the diagonal action of the group
$\Aut_Q(X,\mu)$ is trivial. Note that, according to a theorem proved in
\cite{AuV}, the stabilizer with respect to the diagonal action of any
function that is pure in the sense of the previous definition is a
subgroup of $\Aut_Q(X,\mu)$ compact in the weak topology. It is not
difficult to formulate the notion of purety and its group-theoretic
meaning for the skew equivalence.

\section{Equivalence invariants}

\subsection{Constructing the system of coherent distributions}

In what follows, we speak only about the main equivalence,
leaving it to the reader to reformulate the results for other
equivalences.

We will define a system of characteristics that uniquely determines
a pure measurable function of several variables up to the equivalence 
described above. In fact, this is the {\it system of joint distributions}, 
whose definition is similar to Kolmogorov's definition of a random process; 
however, its peculiarity is that the parameter set (``time'') is a measure 
space, with all that this entails. Nevertheless,  for this ``pseudoprocess'' 
one can also state an analog of Kolmogorov's theorem on the existence of a 
measure on ``trajectories''; moreover, since the parameter set is a
Lebesgue space, the corresponding process is separable in spite of the
fact that the base is continual. Since we will not use this notion in what
follows, we provide almost no details, but we find it
important. One of the questions is when this definition gives a
function of several variables.

First we prepare necessary spaces. Recall that we endow the space
$$
A^n=\underbrace{A\times \cdots  \times A}_{n \ \mbox{\small{factors}}},
\quad n=1,2, \dots, 
$$ 
with the Borel structure of the product of spaces (generated by
the products of Borel sets on the factors), and by
$\meas(A^n)$ denote the simplex of all Borel probability measures
on the Borel space
$A^n$, $n=1,2 \dots$.
***
We define Borel homomorphisms $V^n_X$ and 
$V^n_Y$ of the spaces $X^n$ and $Y^n$, $n=1,2\dots$, to 
the space $\meas(A^n)$ as follows. For
$n=1$, we set $V_X(x)\equiv \alpha^f_x\in \meas(A)$ 
(respectively, $V_Y(y)=\alpha^f_y \in \meas(A)$), 
where $\alpha^f_x$ (respectively, $\alpha^f_y$) is the distribution of the section 
$f(x,\cdot)$ regarded as a function of one variable $y\in Y$ with
respect to the measure $\nu$ 
(respectively, the section $f(\cdot,y)$ regarded as a function of the
variable $x\in X$ with respect to the measure $\mu$).  The fact that
these maps are Borel follows from the measurability of the function.

Let $n>1$. To almost every, with respect to the
measure
$\mu^n\equiv\underbrace{\mu \times\cdots \times \mu}
_{n \ \mbox{\small{factors}}}$,
collection of points $(x_1,\dots, x_n) \in X^n$
(respectively, $(y_1,\dots y_n)\in Y^n$), we associate the measure 
$\alpha^{f,1}_{x_1,\dots, x_n}$ on $A^n$ that is the joint
distribution of the family of sections
$\{f(x_1,\cdot),f(x_2,\cdot), \dots, f(x_n,\cdot)\}$ regarded
as an $A^n$-valued vector function on $(Y,\nu)$; 
in the same way we define the distributions
     $\alpha^{f,2}_{y_1, \dots, y_n}$. Denote these families by
     $\Lambda^f_X=\{\alpha^f_{x_1,\dots,x_n}; \{x_i\}\in X^n,
\ n=1,2,\dots \}$      (respectively, $\Lambda^f_Y(f)$ for the second
variables); obviously, they are coherent in the natural sense:
the projection of the measure $\alpha^f_{x_1,\dots, x_n}$ on $A^n$ along the
$i$th coordinate coincides with the measure
  $\alpha^f_{x_1,\dots, \wh x_i,  \dots, x_n}$ on
$A^{n-1}$. Each of the two families $\Lambda$, regarded as a
collection of measures on   $A^n$, $n=1,2,\dots$, 
will be called the  {\it  system of joint distibutions
$(${\rm SJD}$)$} of sections of the correspondent variable; both
families together will be called the system of joint distributions of sections.
In the similar definition for functions of more than two variables, one
should consider sections of codimension one (rather than
one-dimensional sections).

\begin{Rem}
The joint distribution of sections is, obviously, well defined with
respect to ${\rm mod}\, 0$, i.e., the family $\Lambda$ depends only on the
class of functions coinciding
${\rm mod}\, 0$.
\end{Rem}

\begin{Rem}
It is obvious from the definition that the SJD is invariant under the
equivalence introduced above; more exactly, if two functions (defined
on different spaces
$(X\times Y, \mu_1\times\mu_2)$ and $(X'\times Y', \break
\mu'_1\times\mu'_2)$) are equivalent and this equivalence is implemented
by a measure-preserving transformation $T\times S: X\times Y \to X'\times Y'$, 
then the same transformation sends the SJD of the first function to
the SJD of the second function.
\end{Rem}

Fix a Borel set $B$ in $A^n$ and consider the following measurable
subsets in $X$ and $Y$:
$$ 
C^X_B=\{x\mid\exists (y_1,y_2, \dots, y_n)\in Y^n \ :
(f(x,y_1),f(x,y_2),\dots, f(x,y_n))\in B\},\, 
$$
$$
\ C^Y_B=\{y\mid\exists (x_1,x_2, \dots, x_n)\in X^n:
 (f(x_1,y),f(x_2,y),\dots, f(x_n,y))\in B\}.
$$

\begin{lemma}[on bases]
Let $f$ be a pure function of two variables. The family of sets
$\{C^X_B\}$ where $B$ ranges over a countable basis of Borel subsets in 
$A^n$ and $n$ goes from one to infinity is a countable basis of the
$\sigma$-algebra of measurable sets in
$(X,\mu)$; the same is true for the family of subsets
$\{C^Y_B\}$ in $(Y,\nu)$.
 \end{lemma}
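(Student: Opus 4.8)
The plan is to reduce the assertion to a separation statement and then to extract that separation from purity. Recall the standard fact (Rokhlin's theory of Lebesgue spaces) that in a Lebesgue space a countable family of measurable sets is a basis of the measure algebra if and only if it separates points modulo null sets. Thus it suffices to establish two things about the family $\{C^X_B\}$: that each $C^X_B$ is $\mu$-measurable, and that for $\mu\times\mu$-almost every pair $(x,x')$ with $x\neq x'$ there exist an $n$ and a basic Borel set $B\subseteq A^n$ such that exactly one of $x,x'$ lies in $C^X_B$. Countability of the family is immediate: for each of the countably many $n$ we let $B$ range over a fixed countable basis of $A^n$, and all the sets are read modulo null sets, which is legitimate by the first Remark.

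First I would settle measurability. For fixed $n$ and Borel $B\subseteq A^n$, choose a Borel version of $f$; then the set $\{(x,y_1,\dots,y_n)\in X\times Y^n : (f(x,y_1),\dots,f(x,y_n))\in B\}$ is Borel in the product, since $f$ is measurable and $B$ is Borel. The set $C^X_B$ is precisely its projection onto $X$, hence an analytic set, hence universally measurable; in particular it is $\mu$-measurable. The same argument gives measurability of each $C^Y_B$, and the whole discussion for $\{C^Y_B\}$ in $(Y,\nu)$ is identical by the symmetry of the two variables.

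The heart of the proof is separation, and this is where purity must be used. By definition of purity the partition $\xi_1$ is the point partition modulo null sets, i.e.\ the assignment $x\mapsto f(x,\cdot)$, a section regarded as an $A$-valued function on $(Y,\nu)$ modulo null sets, is essentially injective; so for almost every pair $x\neq x'$ the sections $f(x,\cdot)$ and $f(x',\cdot)$ differ on a set of positive $\nu$-measure. The natural strategy is to factor the separation problem through the section map: encode each section by the coherent data $\alpha^f_x=V_X(x)\in\meas(A)$ together with its higher joint distributions, transport a countable separating family of cylinder sets from the measure spaces $\meas(A^n)$ back to $X$ along the Borel maps $V^n_X$, and identify the pulled-back sets, modulo null sets, with the corresponding $C^X_B$. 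The main obstacle lies in this identification. A priori the set $C^X_B$ reflects only the essential range of the $n$-tuple of values of the section at $x$, a coarser object than the section modulo null sets supplied by purity; consequently one cannot conclude separation from the bare distinctness of sections, and the argument must exploit purity together with the coherence of the system of joint distributions to guarantee that the positive-measure difference of sections is genuinely witnessed by membership in some $C^X_B$. I would therefore carry out the transport step level by level in $n$, checking at each stage that distinct section data force a basic $B\subseteq A^n$ separating $x$ from $x'$, and it is precisely the verification that the range-type conditions defining $C^X_B$ suffice for this—rather than merely the existence of a separating measurable set—that I expect to be the delicate point of the proof.
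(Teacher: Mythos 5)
Your reduction of the lemma to a separation statement is exactly the paper's first move (the paper likewise reads ``basis'' as ``the generated $\sigma$-algebra separates points ${\rm mod}~0$''), and your measurability observation (each $C^X_B$ is the projection of a Borel set, hence analytic, hence universally measurable) is a worthwhile point that the paper passes over in silence. But the proposal stops precisely where the proof has to begin. You correctly isolate the crucial question---why should the range-type conditions defining $C^X_B$ separate almost every pair $x\neq x'$, when purity only guarantees that the \emph{sections} $f(x,\cdot)$ and $f(x',\cdot)$ are distinct as functions ${\rm mod}~0$?---and then you announce that you ``would'' verify it level by level and ``expect'' it to be the delicate point. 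No argument is given, and nothing in your setup (transporting separating families of cylinder sets along the maps $V^n_X$) can supply one: as you yourself observe, which basic $B\subset A^n$ a point $x$ hits is a function of the single section at $x$, so bare distinctness of sections does not by itself produce a separating $C^X_B$. That is a genuine gap, not a deferred verification.

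The paper's mechanism for bridging exactly this gap is an ergodic-theorem (strong law of large numbers) argument, which is entirely absent from your proposal. The paper introduces the equivalence relation $x\sim x'$ defined by: for every $n$ and $\nu^n$-almost every $(y_1,\dots,y_n)$ one has $f(x,y_i)=f(x',y_i)$, $i=1,\dots,n$; it identifies failure of separation by the family $\{C^X_B\}$ with this relation, and then invokes the law of large numbers to pass from this distribution-level statement about the pair of sections (their joint distribution on $A\times A$ being carried by the diagonal) to the pointwise statement $f(x,\cdot)=f(x',\cdot)$ $\nu$-a.e., which purity forbids for $x\neq x'$ off a null set. One may fairly ask whether the paper's own identification of non-separation with the relation $\sim$ is fully argued---your scepticism is aimed at the right spot---but a complete proof must supply that bridge, and yours does not: any finished argument needs the law-of-large-numbers step converting joint-distributional data on pairs of sections into almost-everywhere equality, and your outline never brings in such a tool.
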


\begin{proof}
We must prove that the $\sigma$-algebra spanned by the sets
$\{C^X_B\}$ separates points ${\rm mod}~0$ in the space $(X,\mu)$. 
But this is equivalent to the following assertion. Consider the 
equivalence relation defined as $x\sim x'\Leftrightarrow 
f(x,y_i)=f(x',y_i)$, $i=1, \dots, n$, for almost all collections
$y_1,\dots, y_n \in Y$ with respect to the measure $\nu^n$ and all 
$n=1,2, \dots$. By the ergodic theorem (more exactly, by the strong 
law of large numbers), this means that $f(x,\cdot)=f(x',\cdot)$ a.e.;  
by the purety of $f$, the latter relation determines the partition of 
$(X,\mu)$ into points ${\rm mod}~0$. Therefore, the family
$\{C^X_B\}$ generates the full $\sigma$-algebra of measurable sets
in the space $(X,\mu)$, and the product $\{C^X_B\}\times \{C^Y_{B'}\}$ 
generates the full $\sigma$-algebra of measurable sets in the space
$(X\times Y, \mu \times \nu)$. By construction, the measures of the sets
$C^X_B$ and $C^Y_B$ are determined by the SJD of $f$; hence 
a measurable function $f$ uniquely determines the measure by these data;
in turn, it is also uniquely determined by the family of
distributions of sections. In other words, if we have two measurable
pure functions $f,f'$ on the space $(X\times Y,\mu\times \nu)$ with the 
same SJD, then the functions coincide~${\rm mod}~0$.
\end{proof}

\subsection{Completeness of the system of invariants}

The next theorem states that the SJD is a complete system of
invariants of a pure function of two variables.

 \begin{theorem}[Uniqueness]
 A pure function is uniquely determined
${\rm mod}~0$ by its SJD in the following sense. Assume that two pure
functions  $f$ and $f'$ defined on spaces 
$(X\times Y, \mu\times \nu)$
and  $(X'\times Y', \mu'\times \nu')$, respectively, have the same SJD,
i.e.,
 $$
 \mu (C^X_B(f))=\mu'(C^{X'}_B(f')), \  \nu(C^Y_B(f))=\nu'(C^{Y'}_B(f'))
 $$
for all basis Borel sets $B \subset A^n$ and all $n=1,2,\dots$.
Then the functions $f$ and $f'$ are equivalent.

In particular, if two functions are defined on the same space and
their SJD coincide, then the functions coincide
${\rm mod}~0$.
  \end{theorem}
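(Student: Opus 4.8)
The plan is to reduce the general (two-space) statement to the special ``same-space'' case already contained in the Lemma on bases, by first manufacturing the equivalence transformations $T$ and $S$ out of the families $\{C^X_B\}$ and $\{C^Y_B\}$ alone, and only afterwards checking that $T\times S$ carries $f'$ to $f$.

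First I would build $T\colon X\to X'$ (and symmetrically $S\colon Y\to Y'$). Fix once and for all a countable basis $\{B_k\}$ of Borel sets of the spaces $A^n$, $n=1,2,\dots$; this basis is the same for both functions since $A$ is fixed. Define the coordinate maps $\Phi_X\colon X\to\{0,1\}^{\mathbb N}$, $x\mapsto(\mathbf 1_{C^X_{B_k}}(x))_k$, and likewise $\Phi_{X'}$. By the Lemma the family $\{C^X_{B_k}\}$ separates points ${\rm mod}~0$ and generates the measure algebra of $(X,\mu)$, so $\Phi_X$ is measurable and injective ${\rm mod}~0$; by Rokhlin's theory of Lebesgue spaces \cite{R} it is therefore an isomorphism ${\rm mod}~0$ of $(X,\mu)$ onto $(\{0,1\}^{\mathbb N},\lambda)$ with $\lambda:=(\Phi_X)_*\mu$, and similarly $\Phi_{X'}$ gives $\lambda':=(\Phi_{X'})_*\mu'$. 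It then suffices to see that $\lambda=\lambda'$; granting this, $T:=\Phi_{X'}^{-1}\circ\Phi_X$ is a measure-preserving invertible map ${\rm mod}~0$ sending each $C^X_{B_k}$ to $C^{X'}_{B_k}$.

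The identity $\lambda=\lambda'$ is the technical heart, and the one observation I would single out is that the sets $C^X_B$ are closed under finite intersection at the level of the index $B$: if $B_1\subset A^{n_1}$ and $B_2\subset A^{n_2}$ then
$$
C^X_{B_1}\cap C^X_{B_2}=C^X_{B_1\times B_2},\qquad B_1\times B_2\subset A^{n_1+n_2},
$$
because the joint law of a single section sampled at $n_1+n_2$ independent points of $(Y,\nu)$ factors as a product, so $(\alpha^f_x)^{\otimes(n_1+n_2)}(B_1\times B_2)>0$ exactly when both factors are positive. Consequently the $\Phi_X$-preimages of the positive cylinders $\{\omega:\omega_{k_1}=\dots=\omega_{k_m}=1\}$ are again sets of the form $C^X_B$, whose measures are determined by the SJD (Lemma on bases) and hence, the two systems of joint distributions being equal, coincide with the corresponding measures for $f'$. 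Thus $\lambda$ and $\lambda'$ agree on the positive cylinders, which form a $\pi$-system generating the Borel structure of $\{0,1\}^{\mathbb N}$, so $\lambda=\lambda'$ by Dynkin's $\pi$--$\lambda$ theorem. The same construction on the $Y$-side produces $S\colon Y\to Y'$, and since $T,S$ are measure preserving, $T\times S$ sends $\mu\times\nu$ to $\mu'\times\nu'$.

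Finally I would transport $f'$ back and invoke uniqueness on a single space. Put $f''(x,y):=f'(Tx,Sy)$ on $(X\times Y,\mu\times\nu)$. Being related to the pure function $f'$ by the measure isomorphisms $T,S$, the function $f''$ is again pure, and a direct change of variables gives $C^X_B(f'')=T^{-1}C^{X'}_B(f')$, whence
$$
\mu\bigl(C^X_B(f'')\bigr)=\mu'\bigl(C^{X'}_B(f')\bigr)=\mu\bigl(C^X_B(f)\bigr),
$$
and likewise for the $Y$-sets; so $f$ and $f''$ are two pure functions on the \emph{same} space with the same SJD. By the same-space uniqueness established in the proof of the Lemma on bases, $f=f''$ ${\rm mod}~0$, i.e.\ $f(x,y)=f'(Tx,Sy)$ ${\rm mod}~0$, which is exactly the asserted equivalence; the ``in particular'' clause is the case $T={\rm Id}$, $S={\rm Id}$. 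I expect the main obstacle to be the step $\lambda=\lambda'$: passing from equality of the measures of the \emph{individual} generating sets to an actual isomorphism of the spaces is what forces one to notice the $\pi$-system (product) structure of $\{C^X_B\}$ so that Dynkin's theorem applies.
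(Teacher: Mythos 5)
Your overall route coincides with the paper's: invoke the Lemma on bases, turn the correspondence $C^X_B(f)\leftrightarrow C^{X'}_B(f')$ into measure-preserving isomorphisms $T$ and $S$, and let $T\times S$ implement the equivalence. Where you are more careful than the paper is in building $T$: the paper simply asserts that the correspondence of bases ``determines an isomorphism,'' whereas you supply the ingredient that actually makes this work, namely that the family $\{C^X_B\}$ is a $\pi$-system because $C^X_{B_1}\cap C^X_{B_2}=C^X_{B_1\times B_2}$, so that equality of the measures of \emph{individual} basis sets propagates to the whole generated $\sigma$-algebra by Dynkin's theorem. That identity is correct under the only ${\rm mod}~0$-robust reading of $C^X_B$ (positivity of $(\alpha^f_x)^{\otimes n}$ on $B$), and it is a genuine improvement on the paper's one-line argument; just note that you need the countable bases of the spaces $A^n$ to be chosen closed under finite products, so that $B_1\times B_2$ is again a basis set covered by the hypothesis of the theorem.

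The flaw is in your last step. You justify ``$f$ and $f''$ have the same SJD'' only by the equality of the numbers $\mu(C^X_B(f''))=\mu(C^X_B(f))$, and then appeal to same-space uniqueness. But equality of \emph{measures} of the $C$-sets cannot possibly imply coincidence ${\rm mod}~0$: for any measure-preserving pair $T_0\times S_0$ the function $f\circ(T_0\times S_0)$ satisfies $C^X_B(f\circ(T_0\times S_0))=T_0^{-1}C^X_B(f)$, hence has exactly the same $C$-set measures as $f$, yet by purity (Proposition 1, triviality of the stabilizer) it differs from $f$ whenever $T_0\times S_0$ is nontrivial. So the measure-equality version of same-space uniqueness that you cite is false, and the closing sentence of the Lemma's proof (as well as the theorem's ``in particular'' clause) can only be read as requiring equality of the $C$-sets \emph{themselves}, i.e.\ of the joint distributions, not merely of their measures. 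Fortunately your own construction delivers precisely this stronger fact, though you never invoke it: since $T=\Phi_{X'}^{-1}\circ\Phi_X$, one has $T^{-1}C^{X'}_{B_k}(f')=C^X_{B_k}(f)$ ${\rm mod}~0$ as sets, hence $C^X_B(f'')=C^X_B(f)$ ${\rm mod}~0$, and it is this set equality that legitimately feeds into the uniqueness claim at the end of the Lemma's proof. For the same reason your final remark that the ``in particular'' clause is ``the case $T={\rm Id}$, $S={\rm Id}$'' is not right: the general statement only produces \emph{some} $T,S$, and equivalent but distinct functions on one space show that no specialization of the output gives coincidence; the clause is exactly the set-equality uniqueness you used as an input, not a corollary of the two-space statement.
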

  
\begin{proof}
By the lemma, the
SJD of each of the pure functions
$f$ and $f'$ is a basis in the $\sigma$-algebra of the respective
space. Then, as mentioned at the end of the previous proof,  the correspondence between the sets
$C^X_B(f)$ and $C^{X'}_B(f')$ determines an isomorphism between the spaces  
$(X,\mu)$ and $(X',\mu')$ that sends the measure 
$\mu$ to $\mu'$. A similar assertion holds for the spaces
$(Y,\nu)$ and $(Y',\nu')$, and the product of these isomorphisms is an
isomorphism between the spaces 
$(X\times Y, \mu\times \nu)$ and $(X'\times Y', \mu'\times \nu')$ that
provides an equivalence of the functions
$f$ and~$f'$.
\end{proof}

\begin{Rem}
Writing an explicit formula for the values of a function $f$ in terms
of its SJD in the general case is a difficult task. For an
illustration, we give a concrete example of such a problem. Assume
that we are given a metric measure space; find the distance between
given two points (i.e., recover the metric) knowing all joint
distributions of the distances to any given finite collection of points.
\end{Rem}

\subsection{On systems of joint distributions and pseudoprocesses}
In fact, one can describe exactly to what extent the SJD with respect
to one variable determines the second SJD; namely, given the SJD with
respect to one
variable, a function can be recovered up to one transformation.

\begin{theorem}
Two measurable functions
$f_1$ and $f_2$ defined on a space 
$(X\times Y, \mu\times \nu)$ that have the same SJD in one (say, the
first) variable are related by the following formula: 
$$ 
f_2(x,y)=f_1(x,Ty),
$$
where $T:Y\rightarrow Y$ is a measure-preserving invertible
transformation of the space
$(Y,\nu)$.
\end{theorem}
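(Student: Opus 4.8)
The plan is to realize the SJD in the first variable as the law of the random section $y\mapsto f_i(\cdot,y)$, and then to transport $f_2$ onto $f_1$ by an automorphism of $(Y,\nu)$ matching these laws. Concretely, for $i=1,2$ I would introduce the section map $\Psi_i\colon Y\to\Sigma_X$, where $\Sigma_X$ is the standard Borel space of $\mu$-classes of measurable maps $X\to A$ (topologized by convergence in measure) and $\Psi_i(y)=f_i(\cdot,y)$. Joint measurability of $f_i$ makes $\Psi_i$ Borel, so $m_i:=(\Psi_i)_*\nu$ is a Borel probability measure on $\Sigma_X$. For $\mu^n$-almost every tuple, the pushforward of $m_i$ under evaluation at $(x_1,\dots,x_n)$ is exactly $\alpha^{f_i}_{x_1,\dots,x_n}$; that is, $m_i$ is the law on trajectories of the pseudoprocess $\{x\mapsto f_i(x,\cdot)\}$ whose finite-dimensional distributions constitute the SJD in the first variable. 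Invoking the Kolmogorov-type reconstruction for this pseudoprocess (available because the index space $(X,\mu)$ is Lebesgue, hence the process is separable), the finite-dimensional distributions determine $m_i$ uniquely. Thus the hypothesis that $f_1$ and $f_2$ have the same SJD in the first variable is equivalent to $m_1=m_2=:m$.

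Next I would use purity in the $Y$-variable: since the partition $\xi_2$ of $(Y,\nu)$ into classes $\{y : f_i(\cdot,y)=f_i(\cdot,y')\ \mu\text{-a.e.}\}$ is the partition into points $\mathrm{mod}\,0$, each $\Psi_i$ is injective $\mathrm{mod}\,0$. A $\mathrm{mod}\,0$ injective Borel map between Lebesgue spaces carrying $\nu$ to $m$ is a measure isomorphism onto $(\mathrm{supp}\,m,\,m)$, so $\Psi_1$ and $\Psi_2$ are both isomorphisms $(Y,\nu)\to(\Sigma_X,m)$. Setting $T:=\Psi_1^{-1}\circ\Psi_2$ then yields an invertible measure-preserving transformation of $(Y,\nu)$ with $\Psi_1(Ty)=\Psi_2(y)$, i.e. $f_1(\cdot,Ty)=f_2(\cdot,y)$ in $\Sigma_X$; passing from $\mu$-classes to genuine values and applying Fubini gives $f_2(x,y)=f_1(x,Ty)$ for $(\mu\times\nu)$-almost every $(x,y)$.

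I expect the reconstruction step to be the main obstacle: one must show rigorously that the SJD---a measurable field of finite-dimensional marginals defined only for $\mu^n$-a.e. tuple, indexed by a continuum---determines the full law $m$ on the infinite-dimensional space $\Sigma_X$. The delicate point is that evaluation at a single $x$ is undefined on $\Sigma_X$, so one cannot literally speak of one-dimensional marginals; instead one recovers the measures of cylinder-type sets through averages, by a strong-law-of-large-numbers argument in the exact spirit of the Lemma on bases, and then checks that such sets generate the Borel structure of $\Sigma_X$. A second point worth recording is that purity in the $Y$-variable is not a convenience but a necessity for invertibility of $T$: if, say, $f_i(x,y)$ depends on $y$ alone and $f_2=f_1\circ(\mathrm{id}\times D)$ for a non-invertible measure-preserving $D$, then $f_1$ and $f_2$ share the same SJD yet no invertible $T$ relates them, so the statement should be read for functions that are pure (at least in the second argument).
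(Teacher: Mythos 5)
Your proposal is correct, and its skeleton is the same as the paper's proof: first show that the SJD in the first variable determines the law of the $y$-indexed family of sections, then intertwine the two realizations of that law by an automorphism of $(Y,\nu)$. The differences are in how each step is carried out, and in both cases your version is the one that actually closes the argument. The paper's proof, read literally, reconstructs via limits of empirical distributions only a measure on $\meas(A)$, the law of $y\mapsto\dist f(\cdot,y)$; by itself that object is strictly too weak, since for $f_1(x,y)=x+y$ and $f_2(x,y)=2x+y$ (mod $1$) on the torus every section is uniformly distributed, so both functions induce the same point mass on $\meas(A)$, yet their SJDs differ and no transformation of $y$ relates them. Your target $m_i=(\Psi_i)_*\nu$ on $\Sigma_X$ -- the law of the sections themselves rather than of their distributions -- is the right invariant, and the LLN scheme you outline for showing that the SJD determines $m_i$ (approximate $\int_X\phi(x)\,\psi(f_i(x,y))\,d\mu(x)$ by averages $\frac1n\sum_{k\le n}\phi(x_k)\psi(f_i(x_k,y))$ along a $\mu^{\infty}$-typical sequence, whose joint laws for each fixed $n$ are read off from the SJD, and then check that such functionals generate the Borel structure of $\Sigma_X$) is precisely the rigorous content behind the paper's one-sentence limit claim; it is the $Y$-side analogue of the Lemma on bases. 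Your second point -- that purity in the second variable, of both functions, is indispensable for invertibility of $T$ -- repairs a genuine omission: the theorem as printed carries no purity hypothesis, and the paper's assertion that ``any two such correspondences are sent to each other by a measure-preserving transformation'' is exactly the place where mod-$0$ injectivity of the $\Psi_i$ is needed. Your doubling-map example establishes this, and one can even arrange it with functions that are pure in the first variable: $f_1(x,y)=x+y$ and $f_2(x,y)=x+2y$ (mod $1$) have identical SJD in the first variable (since $2y$ is again uniform in $y$), $f_1$ is pure in both variables, yet $f_2(x,y)=f_1(x,2y)$ and no invertible $T$ can work. In short, the paper's sketch buys brevity and the empirical-distribution picture that is reused in Sec.~3 for matrix distributions; yours buys a statement and proof that are correct as stated.
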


\begin{proof}
Consider the limits of joint distributions in the variable $x$. For fixed
$x_1,x_2, \dots, x_n$, the corresponding joint distribution is a measure
$\alpha_{x_1, \dots, x_n}$ on $A^n$, i.e., on finite ordered collections of 
points from $A$. But as $n \to \infty$, we obtain a limit measure that is 
concentrated on measures (in general, continuous) on $A$. To see this,
it suffices to regard $\alpha_{x_1, \dots, x_n}$ as measures on the empirical 
distributions at the points $x_i$, $i=1,\dots, n$; then their limits will be 
distributions on $A$. But the family of these distributions determines the 
distributions of the functions $\{f(\cdot,y)\}$; more exactly, this is a 
measure on the set of distributions of $A$-valued functions in $x$; we lack 
only a correspondence between points $y \in Y$ and these distributions. Any 
two such correspondences are sent to each other by a measure-preserving 
transformation. Thus the SJD in the first variable $x$ determines a function 
$f$ of two variables up to a transformation of the second variable, i.e., a
measure-preserving transformation of the space~$(Y,\nu)$.
\end{proof}

In fact, the SJD may be regarded in the framework of the following
general definition, which does not involve any function.

 \begin{definition}
Consider a standard measure space
$(X,\mu)$ with a continuous measure and a standard Borel space~$A$. Denote
$(X,\mu)^n=(X^n,\mu^n)$, $n=1,2, \dots$.
A random pseudoprocess is a coherent system of measurable maps (more
exactly, ${\rm mod}\, 0$ classes of maps): 
$$ 
 V_n\,:\,(X,\mu)^n \rightarrow A^n,\quad n=1,2, \dots,
$$
$$
  V_n(x_1,x_2,\dots, x_n)\equiv \alpha_{(x_1,x_2,\dots, x_n)}\in \meas (A^n),
$$ 
where $\meas (\cdot)$ is the simplex of probability measures on the
corresponding Borel space; the coherence condition means that
 $$
E_{x_i}\alpha^n(x_1,\dots, x_n)=\alpha^{n-1}(x_1,\dots, 
\wh x_i,\dots, x_n),\quad n>1.
$$
 \end{definition}
In particular, given a measurable function of two variables, above we
have defined such a random pseudoprocess (in each variable).
In much the same way as a coherent system of joint distributions determines, according to
Kolmogorov, a measure on the space of trajectories, here we have
defined a pseudoprocess as a measure, but not on the set of
trajectories, but, most likely, on measurable $A$-valued functions of points of the
parameter set. Quite possibly, this notion appeared
earlier as a process on a parameter set that is a Lebesgue space.
Note that in spite of the continuality of the parameter set, the
$\sigma$-algebra on which the corresponding law is naturally defined
is separable, by the measurability of the maps
$V_n$. The properties of pseudoprocesses and the question about
whether a given pseudoprocess determines a function of two variables, or a 
polymorphism, or a more general object, will be considered elsewhere.
Here we confine ourselves to a remark anticipating the
constructions of the next section. The canonical image of a function
of two variables, which is defined below, is essentially a realization of this
function as a pseudoprocess, or a random field, over a measure
space; as such a space, one can take the set of all (one-dimensional)
distributions of sections of the function in one of the variables, and
the construction requires only knowing all joint
distributions of sections of the function.

 \subsection{Description of the model}

Now we describe a normal form for the equivalence under
consideration, which will be called
the  {\it canonical image of a pure measurable function
of two variables taking values in a Borel space $A$}.

First we construct a product space on which the normal form will be
defined. This is the square of the space
$\Meas(A)$, which we are going to define now.

Consider the simplex $\meas (A)$ of all Borel probability measures on
the Borel space $A$ endowed with the weak topology determined by a
countable basis of the $\sigma$-algebra of Borel sets.

The space $\Meas(A)$ is the (topological) direct product of the simplex
$\meas (A)$ and the compact space
$S=[0,1]\bigcup \bigcup\limits_{n=1}^{\infty}\big\{1+\frac{1}{n}\big\}$
 -- the union of an interval and a countable set of points.
It is convenient to regard
$\Meas(A)$ as the trivial fiber bundle over
$\meas (A)$ with fiber $S$ and denote by
$\pi$ the natural projection $\Meas (A)\rightarrow \meas (A)$.
The space $\Meas(A)$ is compact in the natural topology. It can be
interpreted as the space of Borel probability measures on $A$ with
multiplicities, or as the {\it space of marked measures}, the
fibers being the sets of possible marks. 

Now we describe the class of measures we will consider on this space.

\begin{definition}
A Borel probability measure on 
$\Meas(A)$ will be called {\it suitable} if it has the following form: 
its projection to  $\meas (A)$ is an arbitrary Borel probability measure 
on the simplex $\meas (A)$ (i.e., a ``measure on measures''), and the 
conditional measures on fibers $S$ turn them into general Lebesgue spaces, 
i.e., the measures on the intervals are continuous Lebesgue measures, and 
the measures of the atoms are nonincreasing in~$n$. 
\end{definition}

Note that a suitable measure is uniquely defined by its projection
to $\meas (A)$ and the types of the conditional measures on the
fibers. It is of importance that a suitable measure admits a
group of automorphisms preserving the measures on the fibers; 
this group may be trivial for some fibers or for all fibers (the 
latter is the case if the given distribution appears once as the 
distribution of a section). This group will be called the fiber group 
of a suitable measure; if a suitable measure has fibers with continuous 
components, then the fiber group is nontrivial.

\begin{definition}
A measurable function $F$ defined on the space
$\Meas(A)\times\Meas(A)$ endowed with the product
$\mu \times \nu$ of suitable measures and taking values in the
Borel space $A$ satisfies the tautology condition (or is called
tautological) if the distributions of sections of the function
$y \mapsto F(x,y)$ (respectively, $x\mapsto F(x,y)$) for almost 
all points $x\in\Meas (A)$ (respectively, $y\in \Meas (A)$), regarded 
as functions on $\Meas (A)$ of the other variable, coincide with 
the measures $\pi(x) \in \meas (A)$ (respectively, $\pi(y)\in\meas(A)$) on~$A$.
\end{definition}

\subsection{The canonical image of a function}

Using the systems of joint distributions, we present a procedure
for successively refining images of a
function so that they converge to its canonical image.

\begin{theorem}
For every pure measurable function $f$ defined on an arbitrary product of two
Lebesgue spaces with continuous measures and taking values in a Borel space
$A$ there exist unique suitable measures
$\mu_f$ and $\nu_f$ on $\Meas(A)$ and a unique measurable, pure,
tautological function
${\rm Can}\,(f)$ on the space $(\Meas(A) \times \Meas(A), \mu_f
\times \nu_f)$ such that $f$ is equivalent to 
${\rm Can}\, f$. The latter function will be called the canonical
image of $f$. The measures 
     $\mu_f$ and $\nu_f$ are invariants of the class of equivalent functions.
\end{theorem}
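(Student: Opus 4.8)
The plan is to read off $\mu_f$, $\nu_f$, and ${\rm Can}(f)$ from the one-dimensional pieces of the SJD, namely the Borel maps $V_X\colon x\mapsto\alpha^f_x$ and $V_Y\colon y\mapsto\alpha^f_y$ into $\meas(A)$, and then to extract both uniqueness statements from the Uniqueness Theorem together with a rigidity property of tautological functions. First I would construct $\mu_f$: push $\mu$ forward by $V_X$ to obtain $\ov\mu=(V_X)_*\mu$ on $\meas(A)$, which will serve as the projection $\pi_*\mu_f$, and decompose $\mu$ into conditional measures $\{\mu_\alpha\}$ along the measurable partition of $X$ into the level sets $V_X^{-1}(\alpha)$. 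By Rokhlin's theory each $(V_X^{-1}(\alpha),\mu_\alpha)$ is a general Lebesgue space, whose type (the mass of the continuous part together with the nonincreasing sequence of atom weights) I transport to the fiber $S$ over $\alpha$; this pins down a suitable measure $\mu_f$, since such a measure is determined by its projection and its conditional types. The measure $\nu_f$ is built identically from $V_Y$ and $\nu$. To define the function I would then select, measurably in $\alpha$, isomorphisms of $(V_X^{-1}(\alpha),\mu_\alpha)$ onto $S$ equipped with the conditional measure of $\mu_f$, assembling them into a measure-preserving isomorphism $\phi_X\colon(X,\mu)\to(\Meas(A),\mu_f)$ with $\pi\circ\phi_X=V_X$, and likewise $\phi_Y$, and set ${\rm Can}(f):=f\circ(\phi_X^{-1}\times\phi_Y^{-1})$. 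By construction this is equivalent to $f$, hence pure; and it is tautological because for a.e.\ $u$ the distribution of the section ${\rm Can}(f)(u,\cdot)$ with respect to $\nu_f$ equals the distribution of $f(\phi_X^{-1}(u),\cdot)$ with respect to $\nu$, which is $V_X(\phi_X^{-1}(u))=\pi(u)$.

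For uniqueness I would begin with a rigidity observation: if $F$ is any pure tautological function on a product of suitable measures $(\Meas(A)\times\Meas(A),\,m\times m')$, then the tautology condition forces its section-distribution map in the first variable to coincide with $\pi$ a.e., so the level-set partition of $F$ is exactly the fibration of $\Meas(A)$ by $\pi$. Running the construction above on $F$ therefore returns precisely the projection $\ov{m}$ and the conditional types of $m$ on the fibers $S$, i.e.\ the canonical measure of $F$ is $m$ itself (and likewise $m'$). Since the assignment $f\mapsto\mu_f$ depends only on the pair $(V_X,\mu)$ and is transported by any equivalence, it is an invariant of the equivalence class; hence for any pure tautological $F$ equivalent to $f$ the underlying measures must equal $\mu_f$ and $\nu_f$. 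This simultaneously shows that $\mu_f,\nu_f$ are invariants and that they are the only suitable measures admitting such an $F$. Finally, with $\mu_f,\nu_f$ fixed, any two pure tautological functions $F_1,F_2$ on $(\Meas(A)\times\Meas(A),\mu_f\times\nu_f)$ equivalent to $f$ are equivalent to each other and so, the SJD being an equivalence invariant, have the same SJD; living on the same space, they coincide ${\rm mod}\ 0$ by the Uniqueness Theorem, giving uniqueness of ${\rm Can}(f)$.

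I expect the real technical work to lie in the measurable assembly of the fiberwise isomorphisms $\phi_X$ and $\phi_Y$. The conditional measures $\mu_\alpha$ vary in type with $\alpha$, so one must realize an entire measurable family of general Lebesgue spaces uniformly inside the single fiber $S$ and check that the resulting $\phi_X$ is genuinely Borel and measure-preserving, not merely a fiberwise bijection. This is precisely what the choice $S=[0,1]\cup\bigcup_{n=1}^{\infty}\{1+\frac{1}{n}\}$ and the normalization ``atom weights nonincreasing in $n$'' are engineered to accommodate, by providing one ambient space into which every type embeds canonically. By comparison, the remaining steps — purity of ${\rm Can}(f)$, the tautology identity, and the invariance of the construction under equivalence — are formal transports of structure, and the uniqueness of the function reduces cleanly to the already-proved Uniqueness Theorem.
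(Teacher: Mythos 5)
Your existence construction is essentially the paper's own first step (your $V_X,V_Y$ are its $L^1_f,L^2_f$, and the lift to $\Meas(A)$ via conditional measures is the same), and your argument that $\mu_f,\nu_f$ are uniquely determined invariants is sound. The genuine gap is in your uniqueness argument for $\mathrm{Can}(f)$. From $F_1\sim f\sim F_2$ you get an equivalence $F_2(Tu,Sv)=F_1(u,v)$, and what equivalence gives is the \emph{transported} coincidence of the systems of joint distributions: $\alpha^{F_2}_{Tu_1,\dots,Tu_n}=\alpha^{F_1}_{u_1,\dots,u_n}$, equivalently equality of the measures of the basis sets $C_B$. But the ``same space implies coincidence $\mathrm{mod}\,0$'' clause of the Uniqueness Theorem requires the SJDs to coincide \emph{pointwise} as maps, $\alpha^{F_2}_{u_1,\dots,u_n}=\alpha^{F_1}_{u_1,\dots,u_n}$ a.e. (with the merely numerical reading the clause is false: $f$ and $f\circ(T\times S)^{-1}$ always have equal measures of the $C_B$'s). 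The two readings agree only when $T$ and $S$ act trivially on the SJD, which, by the Lemma on bases, is the same as $T=\mathrm{id}$, $S=\mathrm{id}$ --- exactly what you are trying to prove; so the argument is circular. Nor does tautology close the gap: it only forces $\pi(Tu)=\pi(u)$ a.e., i.e.\ that $T$ and $S$ preserve each fiber of $\pi$. Whenever the fiber group of $\mu_f$ is nontrivial (fibers with continuous conditional components), take a nontrivial fiber-preserving automorphism $T$ and set $F_2:=F_1\circ(T^{-1}\times\mathrm{id})$: this is pure, tautological, equivalent to $f$, and it differs from $F_1$ on a set of positive measure precisely \emph{because} $F_1$ is pure (trivial stabilizer). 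A concrete instance is the circle metric $f(x,y)=\|x-y\|$ on $(\mathbb{R}/\mathbb{Z})^2$ with Lebesgue measure: all sections have the same distribution, $\mu_f$ lives on a single fiber with continuous conditional measure, and every ``rotation'' of that fiber produces a second pure tautological model of $f$.

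This is exactly the point where the paper does something you omit. It does not fix the fiberwise isomorphisms by an arbitrary measurable selection; it refines the initially ambiguous lift iteratively, matching within each fiber first the two-point joint distributions, then the three-point ones, and so on, and invokes the completeness theorem to conclude that for a pure function the product of these refining partitions converges to the partition into points, so the isomorphisms --- hence $\mathrm{Can}(f)$ --- are pinned down by the SJD itself rather than chosen. Even then the paper is careful to gloss its own uniqueness claim as holding ``up to an isomorphism of fibers.'' Your proof needs this refinement mechanism (some device that distinguishes points inside a fiber by their higher-order joint distributions); without it, the uniqueness of the function simply does not follow from the Uniqueness Theorem.
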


Thus tautological functions provide a normal form for pure functions;
in particular, the theorem says that equivalent pure functions have
the same, up to an isomorphism of fibers (see below), canonical image, and 
nonequivalent functions have different canonical images. 
Another construction of this model of functions will be described in
the last section. It is based on another approach to equivalence
invariants and differs from the construction in the proof of this
theorem in that it  automatically invokes the space
$\Meas$ and marking; more exactly, the
arbitrariness caused by automorphisms of fibers is replaced with the
arbitrariness in the choice of the initial matrix involved in the
construction of that model.

\begin{proof}
Let $f$ be an $A$-valued measurable pure function on $(X\times Y,\mu\times
\nu$).
We will denote by $f^1|_x(y)=f(x,y)$ 
(respectively, $f^2|_y(x)=f(x,y))$ the function obtained from $f$ by
fixing the
first (second) variable. Given a function $g$ on a measure space
with a measure $\gamma$, the notation $\dist (g)= g_*\gamma$ 
stands for the $g$-image of $\gamma$ on the value space, or the
distribution of $g$.

Consider the maps $L_f^1:X\rightarrow \meas(A)$
and $L_f^2:Y\rightarrow \meas(A)$ defined by the formulas
$L_1(x) = \dist (f^1|_x)$,  $L_2(y)=\dist (f^2|_y)$; that is,
the image of a point $x$ (respectively, $y$) is the distribution
of the function $f^1\big|_x$ (respectively, $f^2|_y$) regarded
as a measure on $A$ (i.e., an element of $\meas(A)$).

Each of these maps gives rise to the quotient space of
$(X,\mu)$ (or $(Y,\nu)$) constructed from the partition of this space
into the classes of points for
which the corresponding distributions coincide. These partitions are
invariant, in the sense that every equivalence preserves them.

But it obviously follows from the definition of the space
$\Meas$ that the
maps $L_1,L_2$ can be lifted (in a way that is no longer invariant) to isomorphisms of spaces
${\ov L}_1:X\to \Meas(A)$, ${\ov L}_2:Y\to \Meas(A)$; that is,
${\ov L}_f^1$ and ${\ov L}_f^2$ are isomorphisms
${\rm mod}~0$, which means that the images of different points
$x,x'$ (respectively, $y,y'$) with the same distributions
$\dist \big(f^1\big|_x\big)= \dist \big(f^1\big|_{x^{\prime}}\big)$ 
(respectively, $\dist \big(f^2\big|_y\big)= \dist \big(f^2\big|_{y^{\prime}})$\big) 
in $\Meas$ will be different ${\rm mod}~0$. 

This condition can obviously be satisfied, since
$\Meas(A)$ is a fiber bundle over $\meas (A)$ with fibers that can be turned into
an arbitratry Lebesgue space, but the isomorphism is defined only up
to fiberwise automorphisms.
The measure $\mu\times \nu$ is transferred to $\Meas(A)\times \Meas(A)$, 
and its projection to $\meas (A)$ is an invariant of the equivalence.

Fix a choice of isomorphisms and denote
$(L_f^1)_*\mu=\mu_f$ and $(L_f^2)_*\nu=\nu_f$.
Thus we have constructed an isomorphism
 $$
 L_f^1\times L_f^2: (X\times Y,\mu\times \nu)\rightarrow (\Meas(A)\times \Meas(A),\mu_f\times \nu_f).
 $$

Now consider the classes of points in
$X$ and in $Y$ with the same distributions 
$\dist (f^1|_x)= \dist (f^1|_{x'})$ (respectively,
$\dist (f^2|_y)=\dist (f^2|_{y'})$) ${\rm mod}~0$. 
With each such class $C$ we can do the following. Divide its square
$C\times C$ into classes such that inside each class the joint distribution
corresponding to the coordinates
$c_1$ and $c_2$ is the same. Thus we can refine our isomorphisms on the
classes by requiring that the classes should be preserved. Then we do
the same with the triple joint distributions, etc.
{\it It follows from Theorem~$1$ on the completeness of the SJD that
(for a pure function) the product of all such partitions converges to
the partition into points.} Hence this process yields unique
isomorphisms 
${\ov L}_1:X\to \Meas (A)$
and $ {\ov L}_2:Y\to \Meas (A)$, which will also transfer
the measures $\mu$, $\nu$ 
and the function $f$ to the image. Denote these images by $\mu_f,\nu_f$.
Thus we have obtained an invariantly defined measure space
$(\Meas (A)\times \Meas (A), \mu_f\times \nu_f)$
and a function on this space denoted by
${\rm Can}\, (f)$ and called the canonical image of $f$. The function
${\rm Can}\,(f)$ is tautological, as follows from the first step of
the construction. The uniqueness follows from the same completeness
theorem.

Applying our construction to the spaces
$X=Y=\Meas (A)$
and suitable measures $\mu$ and $\nu$ on $\Meas (A)$, and taking an
arbitrary pure tautological function on
$(\Meas(A)\times \Meas(A), \mu \times\nu)$ as $F$, one can easily
see that the functions
$L_f^1$ and $ L_f^2$ can be assumed to be identity functions. Hence
{\it every pure tautological function on the space
$(\Meas(A)\times \Meas(A), \mu \times\nu)$, where $\mu$ and $\nu$ 
are suitable measures, is a canonical image (of itself)}.
\end{proof}

Thus the class of tautological functions on the space
$\Meas (A)\times \Meas(A)$ with some suitable measures
$\mu$, $\nu$ is the class of all possible normal forms of pure
functions of two variables: every pure function is equivalent to
exactly one function from this class.

Let us make several remarks concerning the canonical image.

\medskip
1. The final space $(\Meas(A)\times \Meas(A), \mu_f \times\nu_f)$ and
the function $f$ are indeed canonical, since their constructions does
not involve objects noninvariant with respect to the equivalence.
Actually, the fact that the image is canonical follows directly from
the purety of the function, since the stabilizer in each factor is
trivial, so that the model is uniquely defined. 

\medskip

2. Consider the simplest special case. Assume that the function is
superpure, i.e., the distributions of the sections
$f(x,\cdot)$ (respectively, $f(\cdot,y)$) for
almost all points $x\in X$ (respectively, $y \in Y$) are distinct. Then the
isomorphism between $(X,\mu)$ 
(respectively, $(Y,\mu)$) and $\meas (A)$ that sends a point $x$
(respectively, $y$) to the measure on $A$ that is the distribution of
the function  $f(x,\cdot)$ (respectively, $f(\cdot,y)$) is the
required one. In this case, a basis can be constructed only from
one-dimensional distributions. To clarify the general case, one may
say that multidimensional distributions are needed in the definition
of the isomorphism only because they allow one to split distributions
having multiplicities in the SJD. However, having constructed an
isomorphism, we must give an explicit description of the function in
terms of the canonical image, and in all cases, including the simplest
one considered above, this is done via the SJD. It would be
interesting to classify measurable functions according to the minimal
collection of joint distributions needed to recover it.
\medskip

3. The space $\meas (A) \times \meas (A)$ is a topological space (the
product of simplices), as mentioned above. But the space
$\Meas (A) \times \Meas (A)$ is also a compact space, since
$\Meas = \meas\times S$. One can define a (formally) different topology
using two-dimensional, rather than only ``one-dimensional,''
distributions. For this it suffices to 
define a neighborhood of a given point as the set of
points such that the two-dimensional joint distribution corresponding
to these two points is concentrated,  in some sense, near the diagonal.

\medskip

4. If the value space $A$ is
endowed with a topology, then we can define the notion of
the {\it measurable continuity} of a function, or, more exactly, of its
canonical image. This continuity means that if
the two-dimensional distributions corresponding to two points are
close, then the values of the function at these points are also close.

\medskip\noindent
\textbf{Example.} Consider a metric $d$ on a Borel space $A$ and
define a metric on $\Meas(A)\equiv X$ by setting
$d_r(x,y)=E_{\alpha_{x_1,x_2}}r(u,v)$, where  $u,v\in A$ and the
expectation is with respect to the joint distribution
$\alpha_{x_1,x_2}$. In a similar way we define a metric on
$Y=\Meas$, and then on $X \times Y$ (the sum of the metrics on the
coordinates). We say that a function is {\it measurably continuous} if
it is continuous in this metric. This can be briefly formulated as
follows: if the mean values of the functions of one variable obtained
by fixing the second variable are close, then the values of the
functions at the given point are close. 

Presumably, an admissible metric regarded as a function of two
variables on a measure space is measurably continuous.

\begin{conjecture} A symmetric measurably continuous function of two
variables on the space
$(X \times X, \mu \times \mu)$ has a well-defined restriction to the
diagonal. More generally, for every measurable partition, every
measurably continuous function of two variables has a well-defined
restriction to almost every element of the partition. 
\end{conjecture}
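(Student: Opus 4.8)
The plan is to work throughout with the canonical image and to upgrade ``measurable continuity'' from a property of a $\mathrm{mod}\,0$ class into honest topological continuity on a metric measure space, after which the diagonal restriction becomes the ordinary restriction of a continuous function to a closed subset.

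First I would pass to the canonical image $F=\operatorname{Can}(f)$ on $(\Meas(A)\times\Meas(A),\mu_f\times\nu_f)$; for a symmetric $f$ the two variables play symmetric roles, so the two SJD coincide, $\mu_f=\nu_f=:\mu$, and $F$ is symmetric. Writing $X=\Meas(A)$, the metric of the Example unwinds to
$$
d_r(x,x')=E_{\alpha_{x,x'}}\,r(u,v)=\int_X r\bigl(F(x,z),F(x',z)\bigr)\,d\mu(z),
$$
an $L^1(\mu)$-distance between the sections $F(x,\cdot)$ and $F(x',\cdot)$. Since the canonical image is pure, distinct points carry sections that differ on a set of positive measure, so $d_r$ separates points $\mathrm{mod}\,0$; measurable continuity then says exactly that $F$ is jointly continuous on $(X\times X,\,d_r\oplus d_r)$.

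Next I would establish a full-support property. The space $(X,d_r)$ is separable (its Borel $\sigma$-algebra is separable by the lemma on bases, and $\meas(A)$ is Polish), so the $d_r$-support of $\mu$ carries full measure; hence for $\mu$-a.e.\ $x$ every ball $\{x':d_r(x,x')<\varepsilon\}$ has positive measure. Consequently $\{(x,x):x\in\operatorname{supp}\mu\}\subseteq\operatorname{supp}(\mu\times\mu)$, and any two $d_r$-continuous functions agreeing $\mu\times\mu$-a.e.\ must agree at each such $(x,x)$. Thus the continuous representative is unique along the diagonal, so $g(x):=F(x,x)$ is unambiguously defined for a.e.\ $x$ and is independent of the $\mathrm{mod}\,0$ representative of $f$; it is measurable because $x\mapsto(x,x)$ is measurable and $F$ is. Equivalently $g(x)=\lim_{d_r(x',x)\to0}F(x,x')$, the limit existing because $d_r(x',x'')\le d_r(x',x)+d_r(x,x'')$ forces the net $\{F(x,x')\}$ to be $r$-Cauchy.

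For a general measurable partition --- whose elements are the blocks $C\times C$ with $C$ ranging over a measurable partition $\eta$ of $X$, the diagonal being the limiting case with $\eta$ the partition into points --- I would run the same argument inside a.e.\ block using the conditional measure $\mu_C$ and the conditional metric $d_r^{C}(x,x')=\int_C r\bigl(F(x,z),F(x',z)\bigr)\,d\mu_C(z)$. Once $\mu_C$ has full $d_r^{C}$-support, $\operatorname{supp}(\mu_C\times\mu_C)$ exhausts $C\times C$ and the uniqueness argument pins down $F|_{C\times C}$ there, independently of the representative. The hard part is precisely this inheritance: measurable continuity was imposed against the ambient $\mu$, whereas $d_r^{C}$ integrates $r$ against the conditional $\mu_C$, so joint $d_r$-continuity of $F$ must be shown to survive the change of reference measure for a.e.\ element $C$. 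I expect this to demand a disintegration/Fubini argument propagating a single modulus of continuity uniformly along the fibers of $\eta$, reinforced by Lebesgue-density estimates to cope with null blocks; a secondary but indispensable check is that the restriction is independent of the auxiliary metric $r$ on $A$, which I would settle by comparing the topologies that different admissible $r$ induce on the common support.
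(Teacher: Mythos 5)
First, a point of order: the statement you set out to prove is labeled a \emph{conjecture} in the paper. It appears at the very end of Sec.~2, immediately after the informal ``Example'' defining measurable continuity, and the paper offers no proof of it whatsoever. So there is no paper proof to compare your attempt against; it must stand entirely on its own.

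On its own merits, your proposal has a sound core but two genuine gaps. The sound core is the uniqueness argument: if one reads the hypothesis as saying that $f$ admits a representative $F$ continuous at \emph{every} point of $(X\times X,\,d_r\oplus d_r)$ (with $X$ the canonical image and $d_r$ the $L^1$-distance between sections, which indeed depends only on the $\mathrm{mod}\,0$ class of $f$ because it is computed from the SJD), then two such representatives agree on a closed conull set, hence on $\operatorname{supp}(\mu\times\mu)=\operatorname{supp}\mu\times\operatorname{supp}\mu$, hence at every diagonal point $(x,x)$ with $x\in\operatorname{supp}\mu$, and $\operatorname{supp}\mu$ is conull by the separability you correctly establish. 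Under that strong reading this already finishes the diagonal case, and your ``existence'' step is superfluous. The first gap is exactly that existence step, which is what you would actually need under the weaker (and more natural) reading that continuity is only given off the diagonal or almost everywhere: the claim that the triangle inequality $d_r(x',x'')\le d_r(x',x)+d_r(x,x'')$ ``forces the net $\{F(x,x')\}$ to be $r$-Cauchy'' is a non sequitur. Closeness of $x'$ and $x''$ in $d_r$ gives no control on $r\bigl(F(x,x'),F(x,x'')\bigr)$ unless $F$ has a modulus of continuity valid uniformly in a neighborhood of the diagonal point $(x,x)$ --- which is precisely the continuity at the diagonal you are trying to establish --- and even granting Cauchyness you would need completeness of $(A,r)$ to get a limit. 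The second gap is the entire second sentence of the conjecture: for the general measurable partition you only name the difficulties (inheritance of continuity from $d_r$ to the conditional metrics $d_r^{C}$, where $\mu_C$ lives on a $\mu$-null set so that $d_r$-continuity gives no a priori control of $d_r^{C}$-continuity; the density estimates; independence of the auxiliary metric $r$) without resolving any of them, and you say so yourself. As it stands, the proposal establishes at most the diagonal statement under the strong reading of the hypothesis; the general statement remains open --- consistent with the fact that the paper itself leaves the whole assertion as a conjecture.
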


\section{Relation with random matrices: classification via randomization}

\subsection{The recovery theorem}
Recall the main definitions and results of \cite{V1}. As above, let 
$f$ be a pure function of two variables $(x,y)$ defined on a space
$X\times Y$ with a product measure $\mu\times \nu$ and taking values 
in a Borel space $A$. Consider the infinite product
$(X^{\infty}\times Y^{\infty},\mu^{\infty}\times \nu^{\infty})$
with the Bernoulli measures and define a map
$$
F_f:(X^{\infty}\times Y^{\infty})\longrightarrow M_{\infty}(A),
$$
where $M_{\infty}(A)$ is the space of $A$-valued infinite matrices,
by the formula
$$
F(\{x_i\}_1^{\infty},\{y_j\}_1^{\infty})=\{f(x_i, y_j)\}_{i,j=1}^{\infty}.
$$

The map $F$ is well defined with respect to 
${\rm mod}~0$ and sends 
 $\mu^{\infty} \times \nu^{\infty}$ to a measure $D_f$,
which was called the {\it matrix distribution} of $f$. Obviously,
$D_f$ is a Borel probability measure
on $M_{\infty}(A)$ that is invariant under the group of all permutations of rows
and columns of the matrix and ergodic with respect to this group. This
follows from known facts on Bernoulli measures.

\begin{theorem}[\!\!\cite{V1}] The measure  $D_f$ is a complete
invariant of the function $f$ with respect to the equivalence; in
other words, two pure functions
$f$ and $f'$ are equivalent if and only if the measures
$D_f$ and $D_{f'}$ coincide.
\end{theorem}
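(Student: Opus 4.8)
The plan is to prove both implications, deducing the nontrivial direction from the completeness of the SJD (Theorem~1) by reconstructing the SJD from the matrix distribution via the ergodic theorem.

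The easy direction is that equivalence implies $D_f=D_{f'}$. If $f\sim f'$ through measure-preserving $T:X\to X'$ and $S:Y\to Y'$ with $f'(Tx,Sy)=f(x,y)$, then applying $T$ and $S$ coordinatewise to an i.i.d.\ sample $(\{x_i\},\{y_j\})$ produces an i.i.d.\ sample $(\{Tx_i\},\{Sy_j\})$ from $\mu'\times\nu'$ by measure-preservation, and the two matrices coincide entry by entry, since $f'(Tx_i,Sy_j)=f(x_i,y_j)$. Hence $F_f$ and $F_{f'}\circ(T^{\infty}\times S^{\infty})$ agree mod $0$, so pushing forward $\mu^{\infty}\times\nu^{\infty}$ gives $D_f=D_{f'}$.

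For the converse I would show that $D_f$ determines the entire SJD of $f$, after which Theorem~1 closes the argument. Fix a $D_f$-typical matrix $M=\{f(x_i,y_j)\}$. Conditionally on the (a.s.\ distinct) row parameters $x_{i_1},\dots,x_{i_n}$, the column vectors $(f(x_{i_1},y_j),\dots,f(x_{i_n},y_j))$, $j=1,2,\dots$, are i.i.d.\ samples from the joint distribution $\alpha^f_{x_{i_1},\dots,x_{i_n}}\in\meas(A^n)$, because the $y_j$ are i.i.d. By the strong law of large numbers the empirical measures
$$
\frac1N\sum_{j=1}^{N}\delta_{(f(x_{i_1},y_j),\dots,f(x_{i_n},y_j))}\ \longrightarrow\ \alpha^f_{x_{i_1},\dots,x_{i_n}}
$$
weakly, $D_f$-almost surely. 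Thus from $M$ alone one recovers $\alpha^f_{x_{i_1},\dots,x_{i_n}}$ for every finite index set and every $n$, and symmetrically for columns. Since the $x_i$ are i.i.d.\ $\mu$, the joint law of this recovered family is exactly the SJD $\Lambda^f_X$ (and $\Lambda^f_Y$ for columns). Consequently the recovery map $M\mapsto(\text{family of limiting joint distributions})$ transports $D_f$ to the SJD, so $D_f$ determines $\Lambda^f_X,\Lambda^f_Y$, and in particular the numbers $\mu(C^X_B)$, $\nu(C^Y_B)$ entering the hypothesis of Theorem~1.

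The main obstacle is to run the law of large numbers \emph{simultaneously} over all finite index sets and all $n$ on a single full-measure set of matrices; this is handled by fixing a countable convergence-determining family of test sets $B$ and intersecting the countably many almost-sure events, using Fubini to integrate over the row parameters. One must also verify that the recovery map is genuinely measurable (so that it pushes $D_f$ forward) and invoke the stated ergodicity of $D_f$ under $S_\infty\times S_\infty$ together with the continuity of $\mu,\nu$ to guarantee that the sampled points a.s.\ carry the full distributional information. Granting these routine measure-theoretic checks, $D_f=D_{f'}$ forces identical SJDs for $f$ and $f'$, whence Theorem~1 yields the equivalence $f\sim f'$ and completes the proof.
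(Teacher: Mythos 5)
Your proposal is correct and follows essentially the same route as the paper: the paper also derives this recovery theorem from Theorem~1 by using the ergodic method (empirical distributions of rows and columns of a $D_f$-typical matrix, i.e., the strong law of large numbers for the i.i.d.\ samples) to reconstruct the system of joint distributions from the matrix distribution. The only cosmetic difference is that the paper phrases the averaging as permutation averages over $S_n$ acting on the matrix, while you average along columns directly; these are interchangeable here, as the paper itself notes in the proof of the lemma on bases.
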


Note also that the map $F$ is equivariant with respect to the square
of the symmetric group
$S_{\mathbb N}\times S_{\mathbb N}$; in the space
$M_{\infty}(A)$, this group acts by permutations of rows and columns.
The following simple assertion holds.

\begin{statement} The map $F$ is an isomorphism if and only if the
function is totally pure (see the definition in
Sec.~{\rm1}), i.e., its stabilizer in the product of groups
$\Aut_Q(X,\mu)\times \Aut_Q(Y,\nu)$ is trivial.
\end{statement}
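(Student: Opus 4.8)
The plan is to recognize that $F$, being measure-preserving onto $(M_\infty(A),D_f)$ by construction, is an isomorphism precisely when it is injective $\mathrm{mod}~0$ (measurability of the inverse is then automatic in the Lebesgue category). So the whole statement reduces to identifying the fibers of $F$ with the orbits of the stabilizer, and I would prove the two implications separately.

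For the direction ``not totally pure $\Rightarrow$ $F$ not an isomorphism,'' I would take a nontrivial element $(\phi,\psi)\in\Aut_Q(X,\mu)\times\Aut_Q(Y,\nu)$ of the stabilizer, so that $f(\phi x,\psi y)=f(x,y)$ $\mathrm{mod}~0$, and form the coordinatewise map $\Phi(\{x_i\},\{y_j\})=(\{\phi x_i\},\{\psi y_j\})$ on $X^\infty\times Y^\infty$. The stabilizer identity gives $F\circ\Phi=F$ entrywise, while $\Phi\neq\mathrm{id}$ $\mathrm{mod}~0$: since at least one of $\phi,\psi$ differs from the identity on a set of positive measure, almost every sequence is moved by $\Phi$ in infinitely many coordinates. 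Here I need only that $\phi,\psi$ are $\mathrm{mod}~0$ automorphisms, not that $\Phi$ preserve $\mu^\infty\times\nu^\infty$: both $p$ and $\Phi(p)$ already lie in the domain, and $F$ sends them to the same matrix, so $F$ fails to be injective on a set of full measure.

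For the converse I would argue contrapositively, manufacturing a nontrivial stabilizer element from non-injectivity. Suppose $f(x_i,y_j)=f(x_i',y_j')$ for all $i,j$ with the tuples distinct on a positive-measure set. Comparing the $i$-th rows and applying the strong law of large numbers in the columns (the $y_j$ and $y_j'$ being i.i.d.) shows that the limiting empirical distributions agree, and the same applied to every finite family of rows shows that the correspondence $x_i\mapsto x_i'$ preserves \emph{all} joint distributions of sections, i.e. the whole $\Lambda^f_X$; symmetrically for $y_j\mapsto y_j'$. Purity first makes these correspondences well defined as maps of points (if $x_i=x_{i''}$, their equal rows force $x_i'=x_{i''}'$), and then the Lemma on bases, which asserts that the SJD generates the $\sigma$-algebras, lets me promote the sample correspondences to global $\mathrm{mod}~0$ bijections $\phi:X\to X$, $\psi:Y\to Y$. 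Any $\mathrm{mod}~0$ bijection preserves the measure class, hence lies in $\Aut_Q$; and evaluating the coincidence along the dense i.i.d. sample upgrades $f(x_i,y_j)=f(\phi x_i,\psi y_j)$ to $f(x,y)=f(\phi x,\psi y)$ a.e., exhibiting $(\phi,\psi)$ as a stabilizer element, nontrivial because the original tuples differ.

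I expect the converse to be the main obstacle, and within it the single delicate step is the promotion of the sample-level correspondence to a genuine global transformation. Preserving only the one-dimensional distributions $\alpha^f_x$ is far too weak --- in examples such as $f(x,y)=x+y$ on the circle all sections are uniformly distributed --- and it is exactly the \emph{joint} distributions that rigidify the correspondence, pinning $\phi$ down up to the stabilizer and no further. Making the extension precise requires knowing that a $\mu^\infty$-typical sample generates $(X,\mu)$ $\mathrm{mod}~0$ through the SJD-measurable sets of the Lemma, so that an SJD-preserving relabeling of the sample is the restriction of a unique measure-class automorphism; this is the analogue, in the present language, of the uniqueness up to relabeling of latent variables in the ergodic representation of exchangeable arrays, and it is where the completeness Theorem on the SJD does the real work.
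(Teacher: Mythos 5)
The paper offers no proof of this Proposition at all (it is introduced as ``the following simple assertion holds''), so there is no line-by-line comparison to make; your attempt must be judged on its own. Your converse direction is sound in outline and is exactly the intended mechanism: equality of two typical matrices plus the strong law of large numbers forces equality of all finite joint distributions of sections along the two samples, and the Lemma on bases / Uniqueness Theorem promotes the sample relabeling to a global pair of transformations intertwining $f$ with itself. One correction there: since the measures of the basis sets $C^X_B$, $C^Y_B$ are themselves determined by the SJD, that promotion automatically yields a measure-\emph{preserving} pair $(T,S)$, not merely a nonsingular one; your side remark that any $\mathrm{mod}~0$ bijection preserves the measure class is false (a Borel bijection can carry a positive-measure set onto a null set), but you never need it, because the basis correspondence hands you measure preservation for free. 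This strengthening is not cosmetic, as the next paragraph shows.

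The genuine gap is in your first direction, precisely at the sentence where you declare that you do not need $\Phi=\phi^{\infty}\times\psi^{\infty}$ to preserve $\mu^{\infty}\times\nu^{\infty}$. You do. If $\phi\in\Aut_Q(X,\mu)$ does not actually preserve $\mu$, then $\phi_*\mu\neq\mu$, the Hellinger integral $\int\sqrt{d(\phi_*\mu)/d\mu}\,d\mu$ is strictly less than $1$, and Kakutani's dichotomy makes $(\phi_*\mu)^{\infty}$ \emph{singular} to $\mu^{\infty}$; hence $\Phi$ maps a conull set into a fixed null set, and $F$ restricted to a conull set avoiding that null set can perfectly well be injective even though $F\circ\Phi=F$ everywhere. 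Since ``isomorphism'' here can only mean isomorphism $\mathrm{mod}~0$, the collision pairs $(p,\Phi(p))$ never obstruct it: they never lie inside one conull set. Moreover this is not a reparable slip, because the implication you are proving is false for $\Aut_Q$: take $X=Y=[0,1]$ with Lebesgue measure and $f=\chi_{\{y<x\}}$. This function is pure, and $(\phi,\phi)$ stabilizes it for \emph{every} increasing nonsingular bijection $\phi$ of $[0,1]$ (for instance $\phi(x)=x^2$), so its $\Aut_Q\times\Aut_Q$-stabilizer is enormous and noncompact; yet $F_f$ is injective $\mathrm{mod}~0$, since almost surely $x_i=\lim_n\frac{1}{n}\sum_{j\le n}r_{ij}$ and $1-y_j=\lim_n\frac{1}{n}\sum_{i\le n}r_{ij}$, so a typical matrix recovers both sequences. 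The Proposition must therefore be read with the measure-preserving stabilizer $\Aut(X,\mu)\times\Aut(Y,\nu)$ --- the congruence group whose compactness and measurable orbit partition the paper invokes immediately after the statement, properties which the $\Aut_Q$-stabilizer in this example visibly lacks. With that reading both of your directions do work: $\Phi$ then preserves the product measure, so $F\circ\Phi=F$ together with $\Phi\neq\mathrm{id}$ $\mathrm{mod}~0$ genuinely contradicts injectivity $\mathrm{mod}~0$, and your converse, corrected as above, produces a measure-preserving stabilizer element.
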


In this case, the map $F$ yields an isomorphism between the
(Bernoulli) actions of the square of the symmetric group in these
spaces. In the case of a pure function $f$, the kernel of $F$ consists of
the orbits of the symmetry group of $f$, and, as was observed
(see \cite{AuV}), the symmetry group is compact, so that the partition
into orbits is measurable. Moreover, one can construct the quotient
space of
$(X\times Y, \mu\times\nu)$ under the partition into orbits,
but it will no longer have the structure of a direct product.
Curiously enough, in spite of the fact that 
for pure, but not totally pure, functions,
$F_f$ is not an isomorphism of the spaces
$(X^{\infty}\times Y^{\infty}, \mu^{\infty}\times \nu^{\infty})$ and $(M_{\infty}(A),D_f)$,
nevertheless, as one can see from the proof of the recovery theorem and the
lemma on bases, a basis of the space
$(X\times Y, \mu \times \nu)$ can be recovered from the basis of the
image  $(M_{\infty}(A),D_f)$.

The construction below gives, in particular, another proof of the
recovery theorem; it uses Theorem~1 and clarifies the ideas
suggested in \cite{V1}.

With obvious modifications, we can also state a similar theorem on
the diagonal equivalence. In this case, we should take one sequence
$\{x_i\}_1^{\infty}$ of independent random variables distributed on
$X=Y$ according to the measure $\mu=\nu$.
If $(X,\mu)$ is a metric measure space and 
$f$ is a metric regarded as a measurable function on
($X\times X, \mu \times \mu$) (which is obviously pure),
this is the theorem saying that the corresponding measure on the space
of metrics on the set of positive integers is a complete invariant of 
metric triples (metric measure spaces). This was proved in
\cite{gr} in a somewhat different form, and then in 
\cite{V2}, by a simpler (ergodic) method, in this form.

Thus we have two types of complete invariants of the equivalence of
arbitrary pure functions of two variables, and thus two methods of
classification of measurable functions. The first one is provided by the
{\it matrix distribution}, and it can be called the {\it
classification via randomization}, since the invariant is a randomized
net of values of the function. Although the definition of this
invariant is very simple, it does not provide a model of the function,
i.e., a method of recovering the function from this invariant, a random
matrix. Such a recovery procedure will be described below. The
second method, considered in this paper, is more cumbersome; it is the
classification via the system of joint distributions, and it allows
one to describe 
a normal form (the canonical image of a function), which is a model of the
function.

In the next section, we will combine both methods; namely, we will
show how one can construct the canonical image from the matrix
distribution, i.e., give an explicit construction of a function with a
given invariant. The construction relies on the {\it ergodic method}
\cite{V0}; more exactly, we systematically use the ergodic theorem
for an action of the symmetric group, or, more specifically,
successively construct empirical distributions.
Another version of constructing a realization of a measurable function
from its matrix distribution was considered in the unpublished
manuscript   \cite{VH}. In terms of the construction below, it uses
only the first averaging process, and hence is also less cumbersome, but
ignores the system of joint distributions.

\subsection{Constructing a function from its matrix distribution}
Let $D$ be a given Borel probability measure on the set of infinite
matrices  $M_{\mathbb N}(A)$, which is the matrix distribution of a
measurable function taking values in a Borel space 
$A$ and defined on a space
$(X\times Y, \mu\times \nu)$; we will not use this space in any way.
The construction below uses only the fact that the measure $D$ is
invariant and ergodic with respect to the action of the group
$S_{\mathbb N}\times S_{\mathbb N}$ (the square of the symmetric
group) by permutations of rows and columns. The construction proceeds
as follows (this is just an application of the ergodic method). We choose a ``typical''
matrix with respect to the measure $D$ (i.e., almost every matrix with
respect to this measure) and construct a space, a measure on this
space, and a required function.

Only then we use the fact that $D$ is the matrix distribution of
a (totally pure) function and give a direct description of
measures $D$ (called simple in \cite{V1}) that can be matrix
distributions. In this case, we will see that $D$ is exactly the
matrix distribution of  the constructed
function (and hence the latter is equivalent to the original function).
Thus we will construct the canonical image of a function with a
given matrix distribution, and the relation to the previous section is
that we construct the SJD of the function. The
construction proceeds in several steps.

\smallskip

1. Denote the matrix by $R=\{r_{i,j}\}_{i,j=1}^{\infty}$. 
By the invariance of the measure $D$ with respect to the group of permutations of
columns, we can apply the ergodic theorem to the action of the group
$S_{\mathbb N}$, considering the ``empirical distribution of rows'':
 $$
\lim_{n\to \infty} \frac{1}{n!}\sum_{g\in S_n} \chi_B(\{r_{i,g^{-1}(1)}\}) \equiv m_R(B).
$$
Here $B$ is a cylinder set in $A^{\infty}$ and 
$\chi_B$ is its characteristic function; the formula counts how
frequently the first column falls into $B$ when the matrix is being permuted. Thus we
obtain a probability measure   $m_R$ on the space
 $A^{\mathbb N}$; of course, it depends on the matrix $R$.

\smallskip
 2. Using the invariance of $D$ with respect to the group
 of permutations of rows, we find the ``empirical distribution of
 measures.'' In more detail, we lift the permutational action of the
 group $S_{\mathbb N}$ on rows from the space
$A^{\mathbb N}$ to the simplex $\meas (A^{\mathbb N})$
of measures on this space,  consider the orbit of the constructed measure
$m_R$ on $A^{\mathbb N}$ with respect to this action, and find the
empirical distribution of measures on this orbit. For this we should
consider a basis in the space of Borel sets on the simplex of measures.
Let $\mathcal B$ be a Borel set in the space of measures. Then the
empirical measure of this set (of measures on $A^{\mathbb N}$) is
given by the formula
 $$
 \lim_{n\to \infty} \frac{1}{n!}\sum_{g\in S_n} \chi_{\mathcal  B}(g(m_R))\equiv M_R(B).
 $$
In this formula, the action of an element $g \in S_n$ is applied to
the measure $m_R$. The limit exists by the same reasons for
$D$-almost all matrices $R$; in terms of $R$, this is a double limit,
and its existence follows from the invariance of $D$ with respect to
the second (row) component of the group
$S_{\mathbb N}\times S_{\mathbb N}$. We have constructed a measure on
$\meas (A^{\mathbb N}) $ (i.e., a measure on measures on $A^{\mathbb N}$)
induced by the measure $m_R$ and, indirectly, by the matrix $R$. 
Denote it by $M_R$.

\smallskip

3. Now we do the same in the other order. First, using the same matrix
$R$, we construct the empirical measure $m^R$ on the columns (using the 
invariance under the group of permutations of rows); this is again a 
measure on~$A^{\infty}$.

\medskip
4. Then, as at Step 2, we take the average of 
$m^R$ over the columns to obtain a measure on
$\meas (A^{\mathbb N})$, i.e., a ``measure on measures''
$M^R$ similar to $M_R$.

\medskip
5. The next important assertion is that each of the measures
$M_R$ and $M^R$, which were initially defined as measures on 
$\meas (A^{\infty})$, induces a pair of measures on
$\Meas A^n$ for all $n=1,2, \dots$, namely, the finite-dimensional
(with respect to $\meas (A^{\infty})$) empirical distributions on
measures on   $A^n$; we denote them by $\mu^n_R$ and $\nu^n_R$. 
For $n=1$, they play exactly the same role as the measures
$\mu_f$ and $\nu_f$ on $\Meas A$, which were introduced in 
Sec.~2 in the definition of the canonical image of a function. Using
these measures, we obtain the SJD of the future function and,
consequently, the function itself. This is the transition to the
canonical image. We do not describe what systems of joint
distributions determine measurable functions. In the previous section,
we moved in the opposite direction and constructed the SJD from a
given function. One can show that the measures
$M_R$ and $M^R$ also determine the function, but we do not use this fact
below, since the existence of the function is already guaranteed and
the problem is how to realize it. Denote this function on
$\Meas A \times \Meas A$ by $f_R$; it is the goal of our construction.
As yet, we have used only the invariance of the measure $D$ with
respect to the square of the symmetric group.

Now we state the main results on this function and on the whole
construction. First we define a class of measures on the space of
matrices specified by a condition called the simplicity of $D$.

\begin{definition} A measure $D$ on the space of matrices $M_{\mathbb
N}(A)$ that is invariant and ergodic with respect to the square of the
symmetric group is called simple if the procedure described above that
with a matrix $R \in (M_{\mathbb N}(A),D)$ associates the pair of
measures
$$
(m_R,m^R) \in  \Meas(A^{\infty})\times \Meas(A^{\infty})
$$
is an isomorphism of measure spaces
$$
(M_{\mathbb N}(A),D) \rightarrow  
(\Meas(A^{\infty})\times \Meas(A^{\infty}), M_R\times M^R).
$$
\end{definition}

The difference between pure and totally pure functions in the context
of matrix distributions was observed by U.~Hab\"ock in
2005; it is not mentioned in the original paper
\cite{V1}, which caused the appearance in that paper of a false assertion 
that the matrix distributions of pure functions are simple measures; it was
corrected in \cite{VH}. This difference forces us to define not only
simple measures, but also {\it semisimple measures}, which are the matrix
distributions of simple functions. A direct description of semisimple
measures in the same terms as in the case of simple measures is hardly possible, 
the reason being that taking the quotient with respect to the compact
subgroup of symmetries
of a function violates the structure of a direct product, so that the
described procedure yields a function defined on a more complicated object,
since its variables are no longer independent. 
But the case of simple measures and, accordingly, of totally pure
functions, is exhausted as follows.

     \begin{theorem}
{\rm1.} If the measure $D$ is ergodic, then the measures $M_R$ and
$M^R$ on $\meas (A^{\infty})$ and, consequently, the measures
$\mu_R,\nu_R$ and the function $f_R$ constructed above coincide for
$D$-almost all matrices $R$. If 
$D=D_f$ is the matrix distribution of a totally pure function $f$,
then the constructed function
$f_R$ is equivalent to $f$, and its matrix distribution coincides with
$D$. Thus the canonical image of
$f_R$ is a realization of the function with a given matrix distribution.

\smallskip
     {\rm2.} The class of simple measures coincides with the class of
     matrix distributions
$D_f$ of totally pure measurable functions $f$.
\end{theorem}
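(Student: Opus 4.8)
The plan is to prove the two parts of the final theorem by carefully tracking what the ergodic method recovers from a typical matrix $R$ and matching it against the canonical-image construction of Sec.~2. For Part~1, the central fact I would establish first is that, under ergodicity of $D$, the empirical measures $m_R$, $m^R$, and the induced measures-on-measures $M_R$, $M^R$ are constant $D$-a.e. This is a direct application of the ergodic theorem for the action of $S_{\mathbb N}\times S_{\mathbb N}$: since each limit in Steps~1--4 is an average of a characteristic function over the orbit of one component of the group, ergodicity forces these averages to equal the space-average $D$-almost everywhere, so the resulting objects $\mu_R,\nu_R$ and hence $f_R$ do not depend on the choice of typical $R$. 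I would present this as the straightforward part, citing the strong law of large numbers exactly as in the Lemma on bases.

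The substantive content of Part~1 is the identification $f_R \cong f$ when $D=D_f$ for a totally pure $f$. Here I would argue that the empirical distribution $m_R$ reconstructs, $\mu^\infty$-a.e., the section distributions $\dist(f^1|_{x_i})$, i.e.\ recovers the one-dimensional SJD in the first variable; then Step~2's averaging reconstructs the measure $\mu_f$ on $\meas(A)$ of Sec.~2, and the finite-dimensional projections $\mu^n_R,\nu^n_R$ of Step~5 reconstruct the full SJD. Once I know that the SJD of $f_R$ equals the SJD of $f$, I invoke Theorem~1 (Uniqueness) to conclude $f_R$ is equivalent to $f$. To close the loop and show the matrix distribution of $f_R$ is again $D$, I would use Theorem~4 (the matrix distribution is a complete invariant): since $f_R\sim f$, their matrix distributions coincide, and the latter is $D$ by hypothesis. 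The statement that $\mathrm{Can}(f_R)$ is a realization with the prescribed invariant then follows immediately from the definition of the canonical image in Sec.~2.

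For Part~2, the equality of the two classes of measures, I would prove two inclusions. For "simple $\Rightarrow$ matrix distribution of a totally pure function," I take a simple $D$, run the construction to obtain $f_R$, and observe that simplicity is precisely the condition that $R\mapsto(m_R,m^R)$ is an \emph{isomorphism} of measure spaces; by Proposition~3 (the map $F$ is an isomorphism iff the function is totally pure) this isomorphism property is exactly what certifies $f_R$ totally pure and $D=D_{f_R}$. For the reverse inclusion, "matrix distribution of totally pure $\Rightarrow$ simple," I again use Proposition~3: total purity of $f$ makes $F_f$ an isomorphism $(X^\infty\times Y^\infty)\to(M_{\mathbb N}(A),D_f)$, and I would show that under this isomorphism the recovery map $R\mapsto(m_R,m^R)$ transports to the canonical coordinates $(\{x_i\},\{y_j\})\mapsto(\{\dist f^1|_{x_i}\},\{\dist f^2|_{y_j}\})$, which is invertible mod~$0$ precisely by total purety; hence $D_f$ is simple.

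\textbf{The main obstacle} I anticipate is the verification that the double empirical limit in Step~2 genuinely reconstructs the SJD and not merely the one-dimensional section distributions: one must check that the induced finite-dimensional measures $\mu^n_R$ on $\meas(A^n)$ capture the joint distributions $\alpha^f_{x_1,\dots,x_n}$, including the multiplicity information needed (per Remark~2 of Sec.~2) to split distributions that coincide. This is the step where the "second averaging" over rows, absent from the simpler construction of \cite{VH}, is essential, and where I expect the argument to require the most care in showing that the separate row- and column-averages are compatible (this compatibility is itself guaranteed by the ergodicity of the full product group, but making it precise for all $n$ simultaneously is the delicate point). The passage from finite-dimensional agreement to equivalence of functions is then purely an appeal to Theorem~1, so the difficulty is concentrated entirely in identifying what the ergodic averages compute.
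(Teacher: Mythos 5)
Your skeleton is the paper's own: ergodicity yields almost-sure constancy of the constructed objects, the ergodic averages are identified with the SJD, Theorem~1 (uniqueness) then gives $f_R\sim f$, and Proposition~3 (total purity $\Leftrightarrow$ $F_f$ is an isomorphism) drives both inclusions of Claim~2. However, two of your concrete steps are wrong as stated. First, your ``central fact'' for Part~1 is false: the empirical measures $m_R$ and $m^R$ are \emph{not} constant $D$-a.e.; only $M_R$ and $M^R$ are (which is all the theorem asserts). Indeed, $m_R$ is the joint law on $A^{\mathbb N}$ of the sections $f(x_1,\cdot),f(x_2,\cdot),\dots$ at the \emph{particular} sampled points $x_i$, and for a pure $f$ the SLLN argument of the Lemma on bases shows this law separates the sequences $\{x_i\}$ mod~$0$, so it cannot be a.e.\ constant; its dependence on $R$ is precisely the information that the second averaging is designed to extract. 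The mechanism you invoke is also misidentified: an average over the orbit of \emph{one} factor of $S_{\mathbb N}\times S_{\mathbb N}$ is invariant only under that factor, and ergodicity of the product action says nothing about such functions, since the factors need not act ergodically. What makes the argument work (and what the paper means) is that $M_R$ is a row-average of a column-invariant function of $R$, hence invariant under the \emph{full} product group; only then does ergodicity force $M_R$, $M^R$, and consequently $\mu_R,\nu_R,f_R$, to be a.e.\ constant.

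Second, in the reverse inclusion of Part~2 you transport the recovery map to $(\{x_i\},\{y_j\})\mapsto(\{\dist f^1|_{x_i}\},\{\dist f^2|_{y_j}\})$ and claim this is invertible mod~$0$ ``precisely by total purety.'' That map records only the \emph{one-dimensional} distributions of the sections, and its injectivity is exactly the strictly stronger ``superpure'' condition of Remark~2 in Sec.~2 (all section distributions distinct); it does not follow from total purity, which is compatible with many points sharing the same section distribution. The correct identification --- the one the paper makes --- is that $m_R$ is the full \emph{joint} law of the sequence of sections, so that the injectivity of $R\mapsto(m_R,m^R)$ rests on the joint distributions separating points (the Lemma on bases, as in the recovery theorem), combined with $F_f$ being an isomorphism by total purity. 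This slip contradicts your own ``main obstacle'' paragraph, where you correctly observe that the multiplicity-splitting power of the joint distributions is indispensable; the fix is to carry the joint laws, not their one-dimensional marginals, through the transport.
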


\begin{proof}
The first assertion in Claim 1 follows immediately from the ergodicity
of the action of the group
$S_{\mathbb N}\times S_{\mathbb N}$. The remaining part of Claim~1 is
reduced to more detailed comments on the construction, which is a
proof. Comparing it with the construction of the canonical image shows
that the required function is constructed via the SJD, as we have
already mentioned.

Now we proceed to Claim 2. Assume that we have a totally pure function
and we have chosen a typical realization of a sequence of i.i.d.\
variables $x_1,x_2, \dots; y_1,y_2,\dots$. Consider the matrix
$R=\{f(x_i,y_j)\}$ and keep track of the procedure described in
Sec.~3.2. The first step, finding the measure $m_R$, is equivalent to 
computing the joint distributions of the countably many functions
$f(x_1,\cdot),f(x_2,\cdot),\dots$ regarded as functions of the second
variable. The measure $m_R$ on $A^{\infty}$ is exactly the joint distribution 
of these functions. The second step, computing the empirical measure from
$\{x_i\}_i$, determines the measure $M_R$ (a measure on measures), but it 
is nothing else than a measure on the distributions of the functions 
$f(\cdot,y)$; more exactly, this is the image of the measure
$(Y,\nu)$ under the map
$y\mapsto \dist_x [f(\cdot,y)]$, where $\dist_x$ stands for the
distribution of a function of $x$ with a fixed $y$. The second series
of operations with the roles of rows and columns interchanged is
interpreted in the same way. Finally, as we have already observed, in the case of a totally pure function the
map $F_f$ is an isomorphism, so
that the map from the space of matrices endowed with the measure
$D_f$ to the space of empirical measures in the definition of a simple
measure is also an isomorphism.

The converse obviously follows from the construction. Namely, consider
a simple measure $D$ on
$M_{\mathbb N}(A)$; by the isomorphism of spaces in the definition of
a simple measure, we do not lose information when constructing the
function $f_R=f$, and the obtained function is totally pure, since the
map $F_{f_R}$ is also an isomorphism. To see that the matrix
distribution of this function is the measure $D$, one can look through
the construction of $f_R$ from the SJD: in fact, the
finite-dimensional restrictions 
of the measures
$M_R$ and $M^R$ provide an approximation of the matrix distribution
by finite-dimensional distributions.
\end{proof}

One can easily see that the described construction is an exact
reproduction of the procedure describing the construction of the
canonical image of a function; it is even somewhat more formalized,
since here the space
$\Meas$ (accounting for the multiplicities of distributions) appears
automatically. Thus, given a matrix $R$, we have constructed the SJD,
which, according to the theorem on bases, 
allows one to recover the
function and hence the matrix distribution.

\smallskip
Supported by the RFBR grants
11-01-12092-ofi-m and 11-01-00677-a.

\end{document}